\documentclass[11pt,a4paper]{article}



\usepackage[T1]{fontenc}
\usepackage[cp1250]{inputenc}

\usepackage{amssymb,amsmath,amsthm,amsfonts,stmaryrd} 
\usepackage{amscd}
\usepackage{bm} 
\input xy
\xyoption{all}

\usepackage{color}


\usepackage{enumerate} 
\usepackage{hyperref} 


\newtheorem{thm}{Theorem}[section]
\newtheorem{prop}[thm]{Proposition}
\newtheorem{lem}[thm]{Lemma}

\theoremstyle{definition}

\newtheorem{remark}[thm]{Remark}
\newtheorem{definition}[thm]{Definition}

\numberwithin{equation}{section}

\newcommand{\VF}{\mathfrak{X}} 
\newcommand{\X}{\mathcal{X}} 
\newcommand{\A}{\mathcal{A}} 
\newcommand{\B}{\mathcal{B}} 
\newcommand{\VV}{\mathcal{V}} 
\newcommand{\Adm}{\mathcal{ADM}} 
\newcommand{\Map}{\mathcal{MAP}}
\newcommand{\R}{\mathbb{R}}
\newcommand{\RR}{\mathcal{R}}
\newcommand{\eps}{\varepsilon}
\newcommand{\del}{\delta}
\newcommand{\g}{\mathfrak{g}}
\newcommand{\dT}{\mathrm{d_T}}

\newcommand{\ra}{\rightarrow}
\newcommand{\lra}{\longrightarrow}
\newcommand{\rel}{\rightarrowtriangle}

\newcommand{\Crit}{\operatorname{Crit}}
\newcommand{\Sec}{\operatorname{Sec}}
\newcommand{\id}{\operatorname{id}}
\newcommand{\dd}{\mathrm{d}} 
\newcommand{\sT}{\textrm{T}}
\newcommand{\T}{\mathrm{T}} 
\newcommand{\V}{\mathrm{V}} 
\newcommand{\pa}{\partial} 

\newcommand{\wh}{\widehat}
\newcommand{\wt}{\widetilde}
\newcommand{\ol}{\overline}


\def\<#1>{\big\langle #1\big\rangle} 


\tolerance=500 \textwidth16cm \textheight24cm \hoffset-1.6cm \voffset-2cm

\begin{document}

\title{ Jacobi vector fields for  Lagrangian systems on  algebroids}
\date{\today}
\author{Micha\l \ J\'o\'zwikowski\footnote{This research was supported by the Polish Ministry of Science and Higher Education under the grant N N201 416839.}\\
 Institute of Mathematics, Polish Academy of Sciences\\\'Sniadeckich 8,
P.O. Box 21, 00-956 Warszawa, Poland\\{\tt mjozwikowski@gmail.com}
}
\maketitle
\begin{abstract}
We study the geometric nature of the Jacobi equation. In particular we prove that Jacobi vector fields (JVFs) along a solution of the Euler-Lagrange (EL) equations are themselves solutions of the EL equations but considered on a non-standard algebroid (different from the tangent bundle Lie algebroid). As a consequence we obtain a simple non-computational proof of the relation between
the null subspace of the second variation of the action and the presence of JVFs (and conjugate points) along an extremal. We work in the framework of skew-symmetric  algebroids. 
\bigskip

\noindent\emph{MSC 2010: 70H03, 53D17, 70H25, 17B66}

\bigskip

\noindent
\emph{Key words: Jacobi field, conjugate points, Lagrangian dynamics, Lie algebroids, variational principle, second variation}
\end{abstract}

\section{Introduction}\label{sec:intro}

\emph{Jacobi vector fields} (JVFs) and \emph{conjugate points} appear naturally in Riemannian geometry in connection with geodesics. There are at least three different ways to speak about JVFs. Consider a Riemannian manifold $(M,g)$ and let $\zeta:[t_0,t_1]\ra M$ be a geodesic. 

\begin{enumerate}[(A)]
\item First of all, we may define a JVF  in geometric terms: by means of the Levi-Civita connection $\nabla$ and the Riemannian curvature tensor $R(\cdot,\cdot)$ of $(M,g)$. It is a vector field $J(t)$ along $\zeta(t)$ which satisfies the following second order differential equation 
\begin{equation}\label{eqn:Jacobi_geod}
\nabla_{\dot\zeta}\nabla_{\dot\zeta}J+R(J,\dot\zeta)\dot\zeta=0.
\end{equation} 
\item In fact \eqref{eqn:Jacobi_geod} (known as the \emph{Jacobi equation}) appears naturally when considering the tangent prolongation of the geodesic equation $\nabla_{\dot\zeta}\dot\zeta=0$. Strictly speaking if $t\mapsto\zeta(t,s)$, where $s\in(-\eps,\eps)$ and $\zeta(t,0)=\zeta(t)$, is a one-parameter family of geodesics, then $J(t):=\pa_s\big|_{s=0}\zeta(t,s)$ satisfies \eqref{eqn:Jacobi}. Conversely, every solution of \eqref{eqn:Jacobi_geod} may be constructed in this way. This gives the second interpretation of JVFs. 
\item Finally, JVFs appear in connection with second variation of the energy functional $S(\zeta)=\frac 12\int_{t_0}^{t_1}g(\dot\zeta,\dot\zeta)\dd t$. Precisely, if the first variation $\del S(\zeta)$ vanishes (i.e. $\zeta$ is a geodesics), then the second variation reads as (see \cite{Milnor})
\begin{equation}\label{eqn:C}
\del^2 S(\zeta)(X,Y)=-\int_{t_0}^{t_1}g\left(Y,\nabla_{\dot\zeta}\nabla_{\dot\zeta}X+R(X,\dot\zeta)\dot\zeta\right)\dd t-g\left(Y(t),\nabla_{\dot\zeta}X(t)\right)\Big|_{t_0}^{t_1},\end{equation}
where $X$ and $Y$ are vector fields along $\zeta$ (and at least one of them is vanishing at the end-points). Inside the integral we recognize the left-hand-side of the Jacobi equation. 
\end{enumerate}
  
This third interpretation of JVFs becomes more important after introducing the notion of conjugate points. We say that $\zeta(t_0)$ and $\zeta(t_1)$ are \emph{conjugated} along $\zeta$ if there exists a non-vanishing JVF $J(t)$ along $\zeta$ which vanishes at $t_0$ and $t_1$ (it corresponds to the existence of a family of geodesics emerging from $\zeta(t_0)$ and meeting again in $\zeta(t_1)$ (up to first order terms)). The celebrated results of Morse \cite{Milnor} links the presence of conjugate points with the properties of the action $S(\cdot)$.

\begin{thm}[Morse]\label{thm:morse}
Consider $\del^2 S(\zeta)(\cdot,\cdot)$ as a symmetric bilinear form on the space of vector fields along $\zeta(t)$ which vanish at $t_0$ and $t_1$. Then,
\begin{enumerate}
\item The index (i.e. the dimension of the negative space) of $\del^2 S(\zeta)(\cdot,\cdot)$ is always finite and equals the number of points (counted with their multiplicities) conjugated to $\zeta(t_0)$, which lie on $\zeta$ between $\zeta(t_0)$ and $\zeta(t_1)$.
\item A vector field $X(t)$ along $\zeta(t)$ is a null vector of $\del^2 S(\zeta)(\cdot,\cdot)$ if and only if $X(t)$ is a JVF vanishing at $t_0$ and $t_1$ (i.e. $\zeta(t_0)$ and $\zeta(t_1)$ are conjugated along $\zeta$). The dimension of the null space equals the multiplicity of $\zeta(t_0)$ and $\zeta(t_1)$ as conjugate points. 
\end{enumerate}
\end{thm}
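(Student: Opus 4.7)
The plan is to prove the two parts in opposite order: part (2) follows from the fundamental lemma of the calculus of variations, while part (1) requires a finite-dimensional reduction via broken Jacobi fields.

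For part (2), I would start from formula \eqref{eqn:C}. When $X$ vanishes at the endpoints, the boundary term disappears, and the condition that $X$ is a null vector becomes
\begin{equation*}
\int_{t_0}^{t_1} g\bigl(Y,\, \nabla_{\dot\zeta}\nabla_{\dot\zeta} X + R(X,\dot\zeta)\dot\zeta\bigr)\,\dd t = 0
\end{equation*}
for every vector field $Y$ along $\zeta$ vanishing at $t_0, t_1$. The fundamental lemma of the calculus of variations then forces $X$ to satisfy \eqref{eqn:Jacobi_geod}; conversely, any JVF vanishing at the endpoints is in the null space directly from \eqref{eqn:C}. Hence the null space coincides with the space of JVFs vanishing at both endpoints, whose dimension is by definition the multiplicity of the conjugate pair.

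For part (1), I would follow the classical Morse reduction. First, choose a subdivision $t_0 = \tau_0 < \tau_1 < \cdots < \tau_N = t_1$ so fine that each subinterval $[\tau_{i-1}, \tau_i]$ contains no conjugate pair; this is possible by a compactness argument using continuous dependence of Jacobi fields on initial data. Decompose the space $\VV$ of admissible variations (vanishing at $t_0, t_1$) as $\VV = \VV_1 \oplus \VV_2$, where $\VV_1$ is the finite-dimensional space of continuous fields which are Jacobi on each subinterval (broken Jacobi fields), and $\VV_2$ is the infinite-dimensional space of fields vanishing at every $\tau_i$. Integration by parts on each subinterval shows that the two summands are $\del^2 S$-orthogonal, and $\del^2 S\big|_{\VV_2}$ is positive definite because sufficiently short geodesic arcs minimize length among nearby curves. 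Consequently, the index of $\del^2 S$ on $\VV$ equals the index of its restriction to the finite-dimensional space $\VV_1$.

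It remains to compute this finite-dimensional index. Let $\iota(\tau)$ denote the corresponding index when $t_1$ is replaced by $\tau \in (t_0, t_1]$. The key lemma is that $\iota$ is monotone non-decreasing in $\tau$, vanishes for $\tau$ sufficiently close to $t_0$, is locally constant away from conjugate points, and at each conjugate point $\tau^*$ jumps upward by exactly the multiplicity of $\tau^*$. Summing the jumps over $(t_0, t_1)$ yields the claimed formula. The main obstacle is this precise jump analysis: one must produce explicit negative directions of $\del^2 S$ when the right endpoint moves past $\tau^*$, typically by perturbing a JVF $J$ vanishing at $t_0$ and $\tau^*$ into a variation that extends beyond $\tau^*$ and makes $\del^2 S$ strictly negative, while simultaneously showing that the dimension of the new negative subspace is controlled exactly by the multiplicity of $\tau^*$, with no extra negative directions accumulating. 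Once this monotone-jump picture is established, the two assertions of the theorem follow immediately.
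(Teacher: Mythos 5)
The paper does not prove this theorem at all: it is quoted as classical background and attributed to Morse, with \cite{Milnor} given as the reference, so there is no in-paper argument to compare against. Your outline is the standard proof from Milnor's book, and its structure is sound. Part (2) as you present it is essentially complete: with vanishing endpoints the boundary term in \eqref{eqn:C} drops, the fundamental lemma forces \eqref{eqn:Jacobi_geod}, and the converse is immediate. (The one technical footnote is that Milnor works with \emph{piecewise} smooth fields, so the second variation formula acquires jump terms $-\sum g\left(Y,\Delta\nabla_{\dot\zeta}X\right)$ at the break points, and one must first kill the integrand piecewise and then the jumps to conclude that a null vector is a globally smooth Jacobi field; if you restrict to smooth fields from the outset this disappears, but then the broken Jacobi fields of part (1) live outside your ambient space, so the two parts should be set up in the same function space.)

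For part (1), the reduction to the finite-dimensional space $\VV_1$ of broken Jacobi fields, the $\del^2 S$-orthogonality of $\VV_1$ and $\VV_2$, and the positivity of $\del^2 S$ on $\VV_2$ via minimality of short arcs are all correctly identified and are genuinely routine. But the step you label as ``the main obstacle'' is in fact the entire content of the index theorem, and your sketch describes what must be true rather than why it is true. Concretely, two separate inequalities are needed at a conjugate instant $\tau^*$ of nullity $\nu$: the lower bound $\iota(\tau^*+\eps)\geq\iota(\tau^*)+\nu$, obtained by extending each JVF vanishing at $t_0$ and $\tau^*$ by zero and perturbing it (Jacobi's classical argument) to produce $\nu$ new negative directions independent of the old ones; and the upper bound $\iota(\tau^*+\eps)\leq\iota(\tau^*)+\nu$, which does not come from exhibiting vectors but from a dimension count --- one shows $\iota(\tau)+\nu(\tau)$ is upper semicontinuous by realizing the index on a fixed finite-dimensional space of broken Jacobi fields whose quadratic form depends continuously on $\tau$. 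Left-continuity of $\iota$ (so that the jump is attributed to $\tau^*$ and not smeared) also needs proof. None of these are supplied, so as written the proposal establishes part (2) but only reduces part (1) to its classical core without discharging it.
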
   

In fact, much of the theory of JVFs and conjugate points can be rewritten for an arbitrary Lagrangian system $L:\T M\ra\R$. There is, of course, sense in speaking about a tangent prolongation of the Euler-Lagrange (EL) equation, i.e. defining a JVF as the first order variation associated with a one-parameter family of solutions as in (B). We may also link such JVFs with the properties of the second variations of the action functional $S(\zeta)=\int_{t_0}^{t_1}L(\dot\zeta)\dd t$ as in (C). Also the notion of conjugate points can be straightforwardly generalized to this situation but, in general, we cannot expect any results of the same type as in the first part of Theorem \ref{thm:morse} unless some additional assumptions are put on $L$. However, the second part of this theorem, i.e. the link between existence of conjugate points and the null subspace of the second variation $\del^2S$, can be maintained.   

What seems to be unclear is the geometric nature of these generalized JVFs (the description corresponding to (A)). In this paper we aim to give an answer to this problem. In fact we will consider Lagrangian systems of a more general class, namely Lagrangian systems on skew-symmetric algebroids. 

Algebroids have gained much attention in recent years providing a useful geometric language to speak about mechanics (see \cite{GG} and the references therein). However, some of the studies of these objects seem to be just vain straightforward generalizations of the classical concepts. We believe that this is not the case of this paper. Our main result states that the Jacobi equation associated with a Lagrangian system $L:E\ra\R$ on a skew-symmetric algebroid $\tau:E\ra\R$ is strictly connected with the EL equations for the lifted Lagrangian system $\dT L:\T E\ra\R$ on the tangent lift algebroid $\T\tau:\T E\ra\T M$. Note that even starting form the classical case of a Lagrangian system on the tangent bundle Lie algebroid $\tau_M:\T M\ra M$ we end up with a Lagrangian system on the algebroid $\T\tau_M:\T\T M\ra\T M$ which is not the standard tangent bundle Lie algebroid (in particular, we can interpret \eqref{eqn:Jacobi_geod} as a projection of the EL equation from $\T\tau_M:\T\T M\ra\T M$ to $\tau_M:\T M\ra M$. This example shows that skew-symmetric algebroids are the proper class of objects which allows to speak about JVFs even if we wish to restrict ourselves to system on tangent bundle Lie algebroids only.

To formulate our main results we need to introduce the notion of an admissible curve $\gamma:[t_0,t_1]\ra E$ on an algebroid $\tau:E\ra M$ and the notion of an infinitesimal admissible variation $\del_\xi\gamma\in\T_\gamma E$ of $\gamma$ generated by $\xi\in E_{\tau\circ\gamma}$. Precise definitions can be found in Section \ref{sec:geom}. At this point an intuitive interpretation will suffice. Treating $E$ as a bundle of generalized velocities (configurations space) of a physical system we may think of $\gamma$ as a true physical trajectory of this system. The variation $\del_\xi\gamma$, in turn, can be understood as a variation at the level of generalized velocities associated with a virtual displacement in the direction of the generator $\xi$. 

\begin{thm}\label{thm:main} Consider a Lagrangian system $L:E\ra\R$ on a skew-symmetric algebroid $\tau:E\ra M$ and an admissible curve $\gamma$ being a solution of the EL equation for this system. A curve $\eta\in E_{\tau\circ\gamma}$ is a JVF along $\gamma$ if and only if the associated infinitesimal admissible variation $\del_\eta\gamma\in\T_\gamma E$ is a solution of the EL equation for the lifted Lagrangian system $\dT L:\T E\ra\R$ on the tangent lift algebroid $\T\tau:\T E\ra\T M$. 
\end{thm}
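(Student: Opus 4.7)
The plan is to reduce the theorem to a variational argument comparing two action functionals. By interpretation (B) of the introduction, $\eta$ is a JVF along $\gamma$ if and only if $\del_\eta\gamma=\pa_s|_{s=0}\gamma(\cdot,s)$ for some one-parameter family $s\mapsto\gamma(\cdot,s)$ of admissible curves solving the EL equation for $L$, with $\gamma(\cdot,0)=\gamma$. Equivalently, $\del_\eta\gamma$ must satisfy the linearization at $\gamma$ of the EL equation for $L$. The theorem therefore amounts to identifying this linearized equation with the EL equation for the lifted Lagrangian $\dT L$ on the tangent lift algebroid $\T\tau:\T E\ra\T M$.

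The first step is to work out a variational description of solutions of the EL equation on $\T\tau:\T E\ra\T M$. The main geometric input is a functorial identification of admissible curves (and their infinitesimal admissible variations) on the tangent lift algebroid with one-parameter families of admissible curves (resp.\ variations) on $\tau:E\ra M$. Concretely, admissible curves in $\T E$ are exactly of the form $\Gamma(t)=\pa_s|_{s=0}\gamma(t,s)$, where each $\gamma(\cdot,s)$ is an admissible curve in $E$, while an infinitesimal admissible variation of $\Gamma$ generated by $\Xi=\pa_u|_{u=0}\eta(\cdot,u)\in\T E$ is obtained by differentiating in $u$ a family $\del_{\eta(\cdot,u)}\gamma(\cdot,u)$ of admissible variations of the curves $\gamma(\cdot,u)$. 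This identification follows from the functoriality of tangent prolongation combined with the explicit description of $\del_\xi\gamma$ from Section \ref{sec:geom}, and uses only skew-symmetry of the algebroid bracket, not the Jacobi identity.

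The second ingredient is the defining property of the tangent lift Lagrangian, $\dT L(\pa_s|_{s=0}\gamma(t,s))=\pa_s|_{s=0}L(\gamma(t,s))$. Integrating in $t$ and commuting integration with $\pa_s$ gives
\begin{equation*}
\int_{t_0}^{t_1}\dT L(\Gamma(t))\,\dd t \;=\; \pa_s\big|_{s=0}\int_{t_0}^{t_1}L(\gamma(t,s))\,\dd t.
\end{equation*}
Applying an infinitesimal admissible variation to this identity and using the correspondence above, the first variation of the $\dT L$-action at $\Gamma$ equals the mixed derivative $\pa_s\pa_u|_{(0,0)}$ of the $L$-action along an appropriate double family $\gamma(t,s,u)$. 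Since $\gamma=\gamma(\cdot,0,0)$ is itself a solution of the EL equation for $L$, vanishing of this mixed derivative for all admissible variations is equivalent to vanishing of the $s$-derivative of the EL equation for $L$ along $\gamma(\cdot,s)$, i.e.\ to $\del_\eta\gamma$ solving the linearized EL equation at $\gamma$. This is precisely what it means for $\eta$ to be a JVF, and the equivalence is the desired if-and-only-if.

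The main obstacle I foresee is the identification in the second paragraph: writing down explicit formulas for admissible curves and infinitesimal admissible variations on the tangent lift algebroid, and checking their compatibility with the canonical flip $\T\T M\ra\T\T M$ (and its counterpart on $\T E$). Once this bookkeeping is carried out, the proof is a clean application of the variational characterization of solutions of the EL equation on a skew-symmetric algebroid; since no appeal to the Jacobi identity is made anywhere, the argument remains valid outside the Lie algebroid setting.
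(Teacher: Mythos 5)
There is a genuine gap: your argument is built on interpretation (B), i.e.\ on realizing infinitesimal admissible variations as derivatives $\pa_s|_{s=0}\gamma(\cdot,s)$ of actual one-parameter families of admissible curves, both on $E$ and on the lifted algebroid $\T\tau:\T E\ra\T M$. That identification is precisely the content of Lemma \ref{lem:adm_var} and holds only for \emph{almost-Lie} algebroids; for a general skew-symmetric algebroid the ``formal'' infinitesimal admissible variations $\del_\xi\gamma=\kappa_\gamma(\dot\xi)$ need not come from homotopies of admissible curves (see Remark \ref{rem:var_formal}), and indeed Remark \ref{rem:Jacobi_adm} shows that for non-AL $E$ the curve $\del_\eta\gamma$ may fail even to satisfy the lifted admissibility condition. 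Since Theorem \ref{thm:main} is stated for arbitrary skew-symmetric algebroids, your ``functorial identification'' in the second paragraph — which you assert ``uses only skew-symmetry of the algebroid bracket'' — is exactly the step that fails; the relevant obstruction is the anchor--bracket compatibility \eqref{eqn:ala}, not the Jacobi identity, so your closing remark misidentifies what the argument actually needs. A secondary issue is your opening equivalence: the paper \emph{defines} a JVF via the tangent prolongation \eqref{eqn:Jacobi_2} of the EL equation, and identifying this with ``$\del_\eta\gamma$ is the $s$-derivative of a family of solutions'' would additionally require an existence theory for the implicit equation \eqref{eqn:EL}, which is nowhere available.

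The paper's proof avoids variational reasoning altogether. It shows that equation \eqref{eqn:Jacobi} \emph{is} the EL equation for $\dT L$ on $\dT E$ by comparing the two diagrammatic constructions directly: using Theorem \ref{thm:alg_lift} for the structure maps of $\dT E$, one checks the two identities $\dd(\dT L)=\alpha_E\circ\T\dd L$ (commutativity of second derivatives of $L(\gamma(t,s))$) and $\T\lambda_L=\lambda_{\dT L}$, whence $\Lambda_{\dT L}=\kappa_{E^\ast}\circ\T\Lambda_L$ and the two first-order implicit equations coincide as equations. This is purely formal and therefore valid for any skew algebroid. Your variational strategy is essentially the machinery the paper deploys later, in Section \ref{sec:variations}, to relate JVFs to the second variation — and there the AL hypothesis is explicitly assumed. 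If you restrict your claim to AL algebroids and replace the appeal to interpretation (B) by the ``formal'' variational characterization of Remark \ref{rem:lagr_var_princ} (which does hold for the lifted system), your argument can be repaired into a proof of Theorem \ref{thm:variations}-type statements, but not of Theorem \ref{thm:main} in its stated generality.
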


What is more, from Theorem \ref{thm:main} and the standard relation of EL equations with variational principles, we almost tautologically obtain the relation between the second variation of the action functional and the existence of JVFs and conjugate points. This time we have to restrict ourselves to a certain subclass of skew-symmetric algebroids, namely \emph{almost-Lie} (\emph{AL}) \emph{algebroids}. 

\begin{thm}\label{thm:main1}
Let $L:E\ra\R$ be a Lagrangian system on an AL algebroid $\tau:E\ra M$. Consider the action 
$$\gamma:[t_0,t_1]\ra E\quad\longmapsto\quad S_L(\gamma):=\int_{t_0}^{t_1}L(\gamma(t))\dd t$$
defined on the space of admissible curves. 
If $\gamma$ is a solution of the EL equation then the second variation $\del^2S_L(\gamma)(\cdot,\cdot)$ is a well-defined bilinear form on the space of curves $(\eta(t),\xi(t))\in E_{\tau\circ\gamma(t)}\times E^0_{\tau\circ\gamma(t)}$. Moreover, $\eta(t) \in E_{\tau\circ\gamma}$ is a JVF along $\gamma$ if and only if 
$$\del^2 S_L(\gamma)(\eta,\xi)=0\quad\text{for any $\xi\in E^0_{\tau\circ\gamma}$}.$$

Moreover, if $E$ is a Lie algebroid, then 
$$\del^2S_L(\gamma)(\cdot,\cdot):E^0_{\tau\circ\gamma}\times E^0_{\tau\circ\gamma}\lra\R$$
is a symmetric bilinear form. The dimension of the null-space of this form equals the multiplicity of points $\gamma(t_0)$ and $\gamma(t_1)$ as conjugated points along $\gamma$.
\end{thm}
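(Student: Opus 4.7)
The plan is to deduce Theorem~\ref{thm:main1} from Theorem~\ref{thm:main} together with the variational principle on the tangent lift algebroid $\T\tau:\T E\to\T M$ applied to the lifted Lagrangian $\dT L$. I take a two-parameter family $\gamma(t,s,r)$ of admissible curves with $\gamma(\cdot,0,0)=\gamma$ whose infinitesimal variations at $(s,r)=(0,0)$ are prescribed by generators $\eta\in E_{\tau\circ\gamma}$ and $\xi\in E^0_{\tau\circ\gamma}$, and set
\[
\Psi(s,r):=S_L(\gamma(\cdot,s,r)),\qquad \del^2 S_L(\gamma)(\eta,\xi):=\pa_s\pa_r\Psi\big|_{(0,0)}.
\]

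First I would verify well-definedness: expanding $\pa_s\pa_r\Psi|_{(0,0)}$ along $\gamma$ produces the EL expression of $L$ contracted against a quadratic term in $\eta,\xi$ (which vanishes because $\gamma$ solves EL) plus boundary contributions involving only the values of $\xi$ at $t_0$ and $t_1$. The boundary condition $\xi\in E^0_{\tau\circ\gamma}$ kills those boundary terms, leaving an expression bilinear in $(\eta,\xi)$ depending only on the infinitesimal data---this is precisely why the second slot must be restricted to $E^0_{\tau\circ\gamma}$.

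The heart of the argument is the identification
\[
\del^2 S_L(\gamma)(\eta,\xi)=\pa_r\big|_{r=0}\,S_{\dT L}\big(\del_{\eta(r)}\gamma(\cdot,0,r)\big),
\]
obtained from $\dT L(\del_\eta\gamma(t))=\tfrac{\dd}{\dd s}\big|_{s=0}L(\gamma(t,s))$ by interchanging differentiation and integration (here $\eta(r)$ is the $s$-generator of the family $\gamma(\cdot,\cdot,r)$ at $s=0$, with $\eta(0)=\eta$). The right-hand side is the first variation of $S_{\dT L}$ at the admissible curve $\del_\eta\gamma\in\T E$ in the direction of the admissible $\T\tau$-variation induced by $\xi$. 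One checks that as $\xi$ varies over $E^0_{\tau\circ\gamma}$ these induced variations range over exactly the admissible $\T\tau$-variations of $\del_\eta\gamma$ vanishing at the endpoints. The variational principle on the AL algebroid $\T\tau$ (which is AL since $E$ is) applied to $\dT L$ then says
\[
\del^2 S_L(\gamma)(\eta,\xi)=0\ \forall\,\xi\in E^0_{\tau\circ\gamma}\ \Longleftrightarrow\ \del_\eta\gamma\text{ solves the EL equation for }\dT L,
\]
and by Theorem~\ref{thm:main} the right-hand condition is equivalent to $\eta$ being a JVF along $\gamma$.

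For the Lie algebroid case, symmetry of $\del^2 S_L(\gamma)(\cdot,\cdot)$ on $E^0_{\tau\circ\gamma}\times E^0_{\tau\circ\gamma}$ follows by comparing $\pa_s\pa_r$ with $\pa_r\pa_s$: the discrepancy is controlled by the Lie bracket $[\eta,\xi]$ of the extended generators, and the Jacobi identity forces this discrepancy to be an exact derivative along $\gamma$ whose integral contributes only at the boundary and vanishes because both $\eta,\xi\in E^0_{\tau\circ\gamma}$. Given symmetry, the first assertion identifies the null-space of $\del^2 S_L(\gamma)$ on $E^0_{\tau\circ\gamma}\times E^0_{\tau\circ\gamma}$ with the space of JVFs along $\gamma$ vanishing at $t_0$ and $t_1$, which is by definition the multiplicity of $\gamma(t_0),\gamma(t_1)$ as conjugate points. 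The main obstacle is the middle step: establishing the precise dictionary between double $E$-variations of $\gamma$ parametrized by $(\eta,\xi)$ with $\xi(t_i)=0$ and admissible $\T\tau$-variations of $\del_\eta\gamma$ vanishing at endpoints, and checking that this dictionary is compatible with the boundary conditions on both sides; once that is set up, the remainder is a formal consequence of the $L$-to-$\dT L$ translation and Theorem~\ref{thm:main}.
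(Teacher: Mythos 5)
Your main line of argument is the same as the paper's: define $\del^2 S_L$ through second-order variations, identify it with the \emph{first} variation of $S_{\dT L}$ at the admissible curve $\del_\eta\gamma$ in the lifted algebroid $\dT E$, and then combine the variational characterization of EL solutions (Remark~\ref{rem:lagr_var_princ} applied to $\dT L$ on $\dT E$) with Theorem~\ref{thm:main} to get the equivalence between JVFs and null directions. The "dictionary" you flag as the main obstacle is exactly what the paper supplies via the vectors $\del_{(\Delta\xi,\eta)}\gamma=\kappa_E\circ\del_{\Delta\xi}(\del_\eta\gamma)$ and the requirement that $\Delta\xi(t_0),\Delta\xi(t_1)$ be null vectors w.r.t.\ $\T\tau:\T E\ra\T M$ (Lemmas~\ref{lem:second_var_rel} and~\ref{lem:second_adm_var}); note that the well-definedness ambiguity is a boundary term $\<\,\cdot\,,\lambda_L(\gamma)>\big|_{t_0}^{t_1}$ evaluated on the \emph{second-order} generator $\Delta\xi$, not on $\xi$ itself, so the condition $\xi\in E^0_{\tau\circ\gamma}$ alone does not kill it --- you must also normalize the endpoint values of the mixed second-order data.

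The genuine gap is in your symmetry argument. You claim the discrepancy between $\del^2S_L(\gamma)(\eta,\xi)$ and $\del^2S_L(\gamma)(\xi,\eta)$ is "an exact derivative along $\gamma$ whose integral contributes only at the boundary and vanishes because both $\eta,\xi\in E^0_{\tau\circ\gamma}$." If that were the mechanism, symmetry would hold on every AL algebroid once the generators vanish at the endpoints, which is false: Section~\ref{ssec:EP_eqn} computes, for a skew-symmetric algebra $\g$,
$$\del^2 S_L(\gamma)(\eta,\xi)-\del^2 S_L(\gamma)(\xi,\eta)=\int_{t_0}^{t_1}\<\dd L(\gamma),J(\gamma,\eta,\xi)>\dd t,$$
with $J$ the Jacobiator --- a bulk integral, not a boundary term, and nonzero for a generic Lagrangian on a non-Lie algebra even with $\eta,\xi$ vanishing at the endpoints. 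The correct mechanism (Lemma~\ref{lem:symmetry}) is that the two defining vectors $\del_{(\Delta\xi,\eta)}\gamma$ and $\del_{(\Delta\eta,\xi)}\gamma$ (with $\Delta\eta$ chosen $\kappa$-related to $\Delta\xi$) differ by a $\T\tau_E$-vertical vector whose vertical part is the Jacobiator $J(\gamma,\eta,\xi)$; this is a trilinear expression involving $\gamma$ as well, not the bracket $[\eta,\xi]$ of extended generators, and it vanishes pointwise because the Jacobi identity holds --- the endpoint conditions play no role in killing it. Once symmetry is established this way, your identification of the null space with JVFs vanishing at $t_0,t_1$, and hence with the multiplicity of the conjugate points, is correct and is how the paper concludes.
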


To sum up, our main purposes in this publication are:
\begin{itemize}
\item Defining the Jacobi equation (and its solutions, JVFs) as the tangent prolongation of the EL equation on a skew-symmetric algebroid. 
\item Giving a geometric interpretation of the Jacobi equation in terms of the EL equation on the tangent lift of the initial algebroid.
\item Studying the relation between second order variations of the action functional and the existence of JVFs and conjugate points.
\item Doing all the above in purely geometric terms avoiding working in local coordinates whenever possible. 
\end{itemize}
We are aware that some readers may prefer coordinate rather than geometric description. For them we summarize the most important formulas in Section \ref{sec:coor}.

\section{Geometric preliminaries}\label{sec:geom}

In this section we introduce some basic concepts associated with the notion of a skew-symmetric algebroid and with the notion of the tangent lift of a tensor field on a manifold. At the end we show how these two constructions join in the concept of the tangent lift of an algebroid. 

\paragraph{Notation.} We will use standard notions and conventions of differential geometry. Throughout the paper we will be working with a vector bundle $\tau:E\ra M$, its dual $\pi:E^\ast\ra M$ and related objects, such as $\T E$, $\T E^\ast$, $\T\T E$, etc. We will use  $\V E=\ker\{\T\tau:\T E\ra\T M\}$ to denote the subbundle of vectors in $\T E$ vertical w.r.t. $\tau:E\ra M$. By $\xi\in E_x$ or $\xi(t)\in E_{x(t)}$ we will denote a section of $E$ along a base path $x(t)\in M$ (such curves will always be defined on the interval $[t_0,t_1]$). Without some abuse of notation we will denote by $E_x$ the set of all such sections (along a fixed base path $x(t)$) and by $E^0_x$ all such sections vanishing at the end-points (i.e. $\xi(t_0)$ and $\xi(t_1)$ are null vectors w.r.t. the vector bundle structure $\tau:E\ra M$). If a manifold carries more than one vector bundle structure (like in the case of $\T E$) we will emphasize which vector bundle structure we are speaking about. For a tangent bundle $\T M$ we will write rather $\T_xM$ than $\left(\T M\right)_x$. We do not introduce any local coordinates at this stage. Readers interested in the coordinate description of the theory should read the text parallel with Section \ref{sec:coor} where such a description is given.

\subsection{Algebroids}

\begin{definition}\label{def:algebroid}
A \emph{skew-symmetric algebroid} (or shortly a \emph{skew algebroid}) structure on a vector bundle $\tau:E\ra M$ is a skew-symmetric bilinear bracket $[\cdot,\cdot]$ on the space $\Sec(\tau)$ of sections of $\tau$, together with a vector bundle morphism $\rho:E\ra \T M$ (the \emph{anchor map}) which satisfy the Leibniz rule: 
\begin{align}
\label{eqn:lieb_rule}
&[X,f\cdot Y]=f[X,Y]+\rho(X)(f)Y 
\intertext{for every $X,Y\in\Sec(\tau)$ and $f\in C^\infty(M)$. If, in addition,  $\rho$ maps the algebroid bracket into the standard Lie bracket on $\T M$, i.e.}
\label{eqn:ala}
&\rho\left([X,Y]\right)=[\rho(X),\rho(Y)]_{\sT M},
\intertext{we call $E$ an \emph{almost-Lie} (\emph{AL}) \emph{algebroid}. If $E$ is AL and, moreover,}
\label{eqn:lie}
&\left(\Sec(\tau),[\cdot,\cdot]\right)\quad \text{is a Lie algebra,}
\end{align}
we speak about \emph{Lie algebroids}.
\end{definition}

\paragraph{Algebroids as DVB morphisms.} There are several equivalent definitions of the algebroid structure on $E$. For example, it can be understood as a linear bivector field $\Pi$ on the dual bundle $\pi:E^\ast\ra M$. Such a $\Pi$ defines a double vector bundle (DVB) morphism $\wt\Pi:\T^\ast E^\ast\ra\T E^\ast$ given by contraction. More information on $\Pi$, how it is constructed, its coordinate description and how the additional conditions (e.g. $E$ being Lie or AL, etc.) are reflected in it can be found in \cite{GG, GGU}. Let us only remark that $\Pi$ is a linear Poisson tensor if and only if $E$ is a Lie algebroid.

Composition of $\wt\Pi$ with the canonical DVB-isomorphism $\RR_E:\T^\ast E\ra\T^\ast E^\ast$ (for more information on $\RR_E$ consult the Appendix)  gives a DVB-morphism $\eps:=\wt\Pi\circ\RR_E:\T^\ast E\ra\T E^\ast$ over the identity on $E^\ast$:
\begin{equation}\label{diag:eps}
\xymatrix{
 & \sT^\ast E \ar[rrr]^{\epsilon} \ar[dr]^{\pi_E}
 \ar[ddl]_{\sT^\ast\tau}
 & & & \sT E^\ast\ar[dr]^{\sT\pi}\ar[ddl]_/-20pt/{\tau_{E^\ast}}
 & \\
 & & E\ar[rrr]^/-20pt/{\rho}\ar[ddl]_/-20pt/{\tau}
 & & & \sT M \ar[ddl]_{\tau_M}\\
 E^\ast\ar[rrr]^/-20pt/{id}\ar[dr]^{\pi}
 & & & E^\ast\ar[dr]^{\pi} & &  \\
 & M\ar[rrr]^{id}& & & M &
}.
\end{equation}

This morphism gives another characterization of the algebroid structure on $E$. It will be particularly useful in the context of Lagrangian mechanics.

\paragraph{Relation $\bm\kappa$.}
Since the dual bundles of $\pi_E:\sT^\ast E\ra E$ and $\sT\pi:\sT E^\ast\ra\sT M$ are, respectively,
$\tau_E:\sT E\ra E$ and $\sT\tau:\sT E\ra\sT M$, the dual to the DVB morphism $\eps$ is a relation $\kappa:\sT E\rel\sT E$.
It is a uniquely defined smooth submanifold $\kappa\subset\T E\times\T E$ consisting of pairs $(X,Y)$ such that
$\rho(\tau_E(Y))=\T\tau(X)$ and
$$\<X,\eps(A)>_{\T\tau}=\<Y,A>_{\tau_E}$$
for any $A\in\T^\ast_{\tau_E(Y)}E$, where $\< \cdot,\cdot>_{\T\tau}$ and $\< \cdot,\cdot>_{\tau_E}$ are natural pairings. This relation can be put into the  following diagram of "double vector bundle relations":
\begin{equation}
\xymatrix{
 & \sT E  \ar[dr]^{\tau_E}
 \ar[ddl]_{\sT\tau}
 & & & \sT E\ar @{->}[lll]_{\kappa}\ar[dr]^{\sT\tau}\ar[ddl]_/-20pt/{\tau_{E}}
 & \\
 & & E\ar[rrr]^/-20pt/{\rho}\ar[ddl]_/-20pt/{\tau}
 & & & \sT M \ar[ddl]_{\tau_M}\\
 \sT M\ar[dr]^{\tau_M}
 & & & E\ar[dr]^{\tau}\ar[lll]_{\rho} & &  \\
 & M\ar[rrr]^{id}& & & M &
}.\label{eqn:kappa}
\end{equation}

\paragraph{Canonical DVB morphisms $\bm{\kappa_Q}$ and $\bm{\alpha_Q}$}
Of particular importance is the relation $\kappa$ associated with the canonical tangent bundle Lie algebroid structure on $\T Q$. We will denote it by $\kappa_Q$ to emphasize where form it originates. In fact, $\kappa_Q$ is not just a relation but a DVB-isomorphism which intertwines the two canonical projections $\tau_{\T Q}:\T\T Q\ra\T Q$ and $\T\tau_Q:\T\T Q\ra\T Q$:
$$\xymatrix{
 & \T\T Q  \ar[dr]^{\tau_{\T Q}}
 \ar[ddl]_{\T\tau_Q}
 & & & \T\T Q\ar @{->}[lll]_{\kappa_Q}\ar[dr]^{\T\tau_Q}\ar[ddl]_/-20pt/{\tau_{\T Q}}& \\
 & & \T Q\ar[rrr]^/-20pt/{\id}\ar[ddl]_/-20pt/{\tau_Q}
 & & & \T Q \ar[ddl]_{\tau_Q}\\
 \T Q\ar[dr]^{\tau_Q}
 & & & \T Q\ar[dr]^{\tau_Q}\ar[lll]_{\id} & &  \\
 & Q\ar[rrr]^{\id}& & & Q &}.
$$

A nice geometric interpretation of $\kappa_Q$ is the following. A vector $\A\in\T_Y\T Q$ can be represented by a curve $s_1\mapsto Y(s_1)\in \T_{q(t)}Q$ with $Y(0)=Y$. Since every vector itself $Y(s_1)$ can be represented by a curve $s_2\mapsto q(s_1,s_2)\in Q$ where $q(s_1,0)=q(s_1)$, the vector $\A$ can be regarded as an infinitesimal object associated with a homotopy $(s_1,s_2)\mapsto q(s_1,s_2)\in Q$. The projection  $Y=\tau_{\T Q}(\A)\in \T Q$ is an infinitesimal object associated with the curve $s_2\mapsto q(0,s_2)$, while the projection $X:=\T\tau_Q(\A)\in\T Q$ with the curve $s_1\mapsto q(s_1,0)$.  
Now we can reverse the order of variables $s_1$ and $s_2$ to obtain a new homotopy $\wt q(s_1,s_2)=q(s_2,s_1)$. The vector $\B$ corresponding to $\wt q$ is the image of $\A$ under $\kappa_Q$. Obviously, $\tau_{\T Q}(\A)=\T\tau_Q(\B)$ and $\T\tau_Q(\A)=\tau_{\T Q}(\B)$. Moreover, we see that $\kappa_Q$ is an involution (hence isomorphism).

Relation $\kappa_Q$ is dual to a DVB morphism $\eps_Q:\T^\ast\T Q\ra\T\T^\ast Q$. Since $\kappa_Q$ is an isomorphism, so is $\eps_Q$. Its inverse is usually denoted by $\alpha_Q:\T\T^\ast Q\ra\T^\ast\T Q$:

$$\xymatrix{
 & \T^\ast\T Q  \ar[dr]^{\pi_{\T Q}}
 \ar[ddl]_{\T^\ast\tau}
 & & & \T\T^\ast Q\ar @{->}[lll]_{\alpha_Q}\ar[dr]^{\T\pi_Q}\ar[ddl]_/-20pt/{\tau_{\T^\ast Q}}& \\
 & & \T Q\ar[rrr]^/-20pt/{\id}\ar[ddl]_/-20pt/{\tau_Q}
 & & & \T Q \ar[ddl]_{\tau_Q}\\
 \T^\ast Q\ar[dr]^{\pi_Q}\ar[rrr]^{\id}
 & & & \T^\ast Q\ar[dr]^{\pi_Q} & &  \\
 & Q\ar[rrr]^{\id}& & & Q &}.
$$
It is defined via the equality 
\begin{equation}\label{eqn:kappa_alpha}
\<X,\Psi>_{\T\tau_Q}=\<\kappa_Q(X),\alpha_Q(\Psi)>_{\tau_{\T Q}},
\end{equation}
for $X\in \T\T Q$ and $\Psi\in \T\T^\ast Q$ such that the first parring makes sense.

\paragraph{Admissible curves and infinitesimal admissible variations.} If $\tau:E\ra M$ carries a skew-algebroid structure, then thanks to $\kappa$ we may introduce the notion of infinitesimal admissible variations of curves in $E$. We begin with the definition of an admissible curve.
\begin{definition} A curve $\gamma:[t_0,t_1]\ra E$ is called \emph{admissible} if the following condition is satisfied
\begin{equation}\label{eqn:adm}\rho(\gamma(t))=\frac{\dd}{\dd t}\left(\tau\circ\gamma(t)\right),\quad\text{for every $t\in [t_0,t_1]$},
\end{equation}
i.e. the tangent prolongation of the base projection of $\gamma$ agrees with the anchor of $\gamma$.
\end{definition}
\noindent In the standard case $E=\T M$ admissibility of $\gamma$ means that $\gamma(t)=\dot\zeta(t)$, where $\zeta:=\tau\circ\gamma$, that is $\gamma$ is a tangent prolongation of its base projection  $\zeta$. 

Let now $\gamma:[t_0,t_1]\ra E$ be an admissible curve. A curve $\del\gamma:[t_0,t_1]\ra \T E$ which projects to $\gamma$ under $\tau_E:\T E\ra E$ is called an \emph{infinitesimal variation of $\gamma$}. Among all variations of a given $\gamma$ we distinguish the subclass of \emph{infinitesimal admissible variations}. These are constructed in the following way.

Take any path $\xi:[t_0,t_1]\ra E$ which projects to the same base path as $\gamma$, i.e. $\tau\circ \xi=\tau \circ\gamma$. The tangent prolongation $\dot\xi(t)$ is a vector field in $\T E$ along $\xi$. Now there exist an unique vector field $\del_\xi\gamma$ in $\T E$ along $\gamma$ which is $\kappa$-related to $\dot\xi(t)$, that is
$$\del_\xi\gamma(t)=\kappa_\gamma\left(\dot\xi (t)\right).$$
The lower index $\kappa_\gamma$ indicates that $\del_\xi\gamma\in\T_\gamma E$.  Note that $\xi\mapsto\del_\xi\gamma$ is linear and that from \eqref{eqn:kappa} it follows that $\del_\xi\gamma$ projects to $\rho(\xi)$ under $\T\tau$:
$$\T\tau(\del_\xi\gamma)=\rho(\xi).$$ 

We can summarize the above construction in the following definition: 
\begin{definition}\label{def:variation}
Infinitesimal variations of the form $\del_\xi\gamma$ are called \emph{admissible}. The curve $\xi$ is called an \emph{infinitesimal generator} of $\del_\xi\gamma$. We distinguish admissible \emph{infinitesimal variations with vanishing end-points} generated by $\xi$'s such that $\xi(t_i)\in E_{\tau\circ\gamma(t_i)}$ for $i=1,2$ is a null vector in $\tau:E\ra M$.
\end{definition}

The intuitive interpretation of admissible curves and admissible variations is the following. We may treat $E$ as a bundle of generalized velocities over $M$ with true velocities on $M$ obtained after applying the anchor map $\rho:E\ra\T M$. Admissible curves are these curves in the generalized configuration space $E$ which correspond to true movements. A generator  $\xi:[t_0,t_1]\ra E$ of an admissible variation can be now understood as a generalized direction in which a variation is performed and the full infinitesimal variation $\del_\zeta \gamma$ is a variation in the generalized configuration space corresponding to the movement in the direction of a generator.

\begin{remark}\label{rem:kappa_var} It may be useful for the reader to describe in detail the connection between relation $\kappa$ and infinitesimal variations in the particular case of $\kappa=\kappa_Q$. Let namely $t\mapsto q(t,s)$ be a $s$-parameterized family of curves in $Q$. It generates a variation $Y(t)=\pa_s\big|_0q(t,s)\in\T_{q(t)}Q$ along $q(t):=q(t,0)$. Denote its $t$-derivative by $\A(t):=\pa_tY(t)\in\T_{Y(t)}\T Q$. On the other hand, we can lift $q(t,s)$ to curves $X(t,s)=\pa_tq(t,s)\in T_{q(t,s)}Q$. Again $t\mapsto X(t,s)$ is an $s$-parameterized family of velocities. It defines a variation $\B(t)=\pa_s\big|_0X(t,s)\in\T_{X(t)}\T Q$ of a velocity curve $X(t):=X(t,0)=\pa_tq(t)\in\T_{q(t)}Q$. Now $\B(t)\neq\A(t)$, as the first projects to $X(t)$ and the other to $Y(t)$ under $\tau_{\T Q}$. We have, however, $\kappa_Q(\A(t))=\B(t)$. 

In other words, $\kappa_Q(\pa_tY(t))$ is a variation at the level of velocities associated with a variation $Y(t)$ at the level of base trajectories.
\end{remark}

As we have seen above, on the tangent bundle Lie algebroid infinitesimal admissible variations are infinitesimal objects associated with true variations, i.e. one-parameter families of admissible curves. It turns out that this correspondence holds if $E$ is an almost Lie algebroid (in particular, it holds for every Lie algebroid). It was first observed in \cite{GG} (see also \cite{GJ_PMP} for a detailed discussion). 
\begin{lem}\label{lem:adm_var}
Let $\tau:E\ra M$ be an AL algebroid. Then the tangent space of a Banach manifold $\Adm([t_0,t_1],E)$ of admissible paths $\gamma:[t_0,t_1]\ra E$ equals
$$\T_\gamma\Adm([t_0,t_1],E)=\{\del_\xi\gamma:\xi(t)\in E_{\tau\circ\gamma(t)}\}.$$ 
\end{lem}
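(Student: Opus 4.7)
Proof plan: This lemma is in essence proved in \cite{GG} and elaborated in \cite{GJ_PMP}; here I outline only the key ideas.

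First I would pin down the Banach manifold structure on $\Adm([t_0,t_1],E)$ --- typically one takes the Banach manifold of $H^1$-paths in $E$ and cuts out the admissible ones by the equation $\rho(\gamma)=\partial_t(\tau\circ\gamma)$, a smooth constraint when $E$ is AL. The tangent space $\T_\gamma\Adm$ then consists of those curves $\delta\gamma:[t_0,t_1]\to\T E$ over $\gamma$ which satisfy the linearization of admissibility. Differentiating along a family $\gamma_s$ and invoking the canonical involution $\kappa_M$ on $\T\T M$ to swap $\partial_s$ and $\partial_t$ at the base level produces a clean identity between $\T\rho(\delta\gamma)$ and the base-level derivative of the variation.

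For the inclusion $\{\delta_\xi\gamma\}\subseteq \T_\gamma\Adm$ I would construct, for each generator $\xi$ along $c:=\tau\circ\gamma$, a one-parameter family $\gamma_s$ of admissible curves whose initial velocity is $\delta_\xi\gamma$. Extend $\xi$ to a time-dependent section $\widetilde\xi_t$ of $E$ on a neighborhood of $c$, consider the flow $\Phi^t_s$ on $M$ of the time-dependent vector field $\rho(\widetilde\xi_t)$, and let $c_s(t):=\Phi^t_s(c(t))$ be the deformed base paths. One then lifts these to admissible curves $\gamma_s$ in $E$; this lift exists and is unique as a consequence of the AL identity $\rho([X,Y])=[\rho(X),\rho(Y)]_{\T M}$, which is precisely the infinitesimal compatibility ensuring that admissibility is preserved by the flow of sections. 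Differentiating in $s$ at $s=0$ and unwinding the definition of $\kappa_\gamma$ from diagram \eqref{eqn:kappa} then identifies $\partial_s|_{s=0}\gamma_s$ with $\kappa_\gamma(\dot\xi)=\delta_\xi\gamma$.

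For the opposite inclusion, pick $\delta\gamma\in\T_\gamma\Adm$ realized by a family $\gamma_s$ and I would recover the generator $\xi$. Setting $\delta c:=\T\tau(\delta\gamma)$, the linearized admissibility together with diagram \eqref{eqn:kappa} provides a linear ODE along $c$ whose solution $\xi$ has $\dot\xi$ being $\kappa$-related to $\delta\gamma$, with an initial condition read off from $\delta\gamma(t_0)$. Verifying $\delta_\xi\gamma=\delta\gamma$ then reduces to checking that the obstruction $w(t):=\T\tau(\delta\gamma)(t)-\rho(\xi)(t)$ obeys a homogeneous linear ODE with vanishing initial data; the inhomogeneity cancels precisely thanks to the AL identity, so $w\equiv 0$ and the two sides agree.

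The main obstacle is this last cancellation. In a skew-symmetric but non-AL algebroid the above $w$ picks up an inhomogeneous source term proportional to $\rho([X,Y])-[\rho(X),\rho(Y)]_{\T M}$, and the map $\xi\mapsto\delta_\xi\gamma$ fails to surject onto $\T_\gamma\Adm$ --- this is exactly why the AL hypothesis is needed, and this is also where purely diagrammatic arguments in terms of $\kappa$ must be supplemented by an integration argument of ODE type.
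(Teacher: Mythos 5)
First, a point of comparison: the paper does not actually prove this lemma --- it is quoted from \cite{GG} and \cite{GJ_PMP} without proof, so there is no in-text argument to measure your plan against. Your outline is a reasonable reconstruction of the argument in those references, and it correctly isolates the role of the almost-Lie identity: both the integrability of $\del_\xi\gamma$ to a one-parameter family of admissible curves and the recovery of a generator from a given variation hinge on $\rho([X,Y])=[\rho(X),\rho(Y)]_{\T M}$, exactly as the paper's Remarks \ref{rem:Jacobi_adm} and \ref{rem:non_iso} indicate. As written, though, it is a plan rather than a proof: the two decisive computations (the ``clean identity'' obtained by swapping $\partial_s$ and $\partial_t$ via $\kappa_M$, and the cancellation of the inhomogeneous term in the ODE for $w$) are asserted, not carried out, and these are precisely the places where the AL hypothesis must visibly enter.

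There is also one concrete gap in the surjectivity direction. To start your linear ODE you need an initial value $\xi_0\in E_{\tau\circ\gamma(t_0)}$ with $\rho(\xi_0)=\T\tau(\del\gamma(t_0))$, and nothing in your argument (or in the lemma's hypotheses) guarantees that $\T\tau(\del\gamma(t_0))$ lies in the image of $\rho$ when the anchor is not fiberwise surjective; the paper's own Remark \ref{rem:non_iso} records the result of \cite{GJ_PMP} only conditionally on such a $\xi_0$ being given. The issue is not cosmetic: for the zero algebroid $E=0_M$ (a Lie algebroid), admissible paths are constant paths in $M$, so the naive tangent space of $\Adm([t_0,t_1],E)\cong M$ at $\gamma$ is $\T_{x}M\neq 0$, while every $\del_\xi\gamma$ vanishes. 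Either the Banach-manifold structure on $\Adm([t_0,t_1],E)$ has to be set up so that its tangent vectors are by definition those arising from admissible homotopies in the sense of \cite{GJ_PMP} (in which case the reverse inclusion needs a justification that does not start from an arbitrary smooth family of admissible curves), or the statement must be restricted to variations whose base component at $t_0$ is $\rho$-related to some element of $E$. You should flag and resolve this point before the reverse inclusion can be considered proved.
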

Let us note that this nice interpretation is not true for general (not AL) skew-algebroids. In fact almost-Lie algebroids are precisely these skew-symmetric algebroids which posses the above property (see the references above).

\subsection{Tangent lifts}
Now we will introduce the notion of a tangent lift of a geometrical object on a manifold and apply it to a skew-algebroid. 

\paragraph{Tangent lifts of multivector fields}
A tangent lift $\dd_\T$ of a multivector field on  a manifold $Q$ was introduced in \cite{GU1} as a natural extension of  a derivation $\dd_{\T}$ of differential forms (in fact, it can be extended to the whole tensor algebra $\mathcal{T}^\bullet(Q)$).  Roughly speaking, $\dd_\T$ is a differential operator which maps tensor fields of type $\alpha$ ($\alpha$ is a multi-index) on $Q$ to tensor fields of the same type on $\T Q$. 
$$\dd_\T:\mathcal{T}^{(\alpha)}(Q)\ra\mathcal{T}^{(\alpha)}(\T Q).$$

For our purposes it will be enough to define $\dT$ on multivector fields. We will state only these results which are important to our considerations, referring to \cite{GU1} as an extensive source of information. 

Introduce local coordinates $(q^\alpha)$ on $Q$ and induced coordinates $(q^\alpha,\dot q^\beta)$ on $\T Q$. For a multivector field $X\in\VF^r(Q)$ where $X(q)=X^{\alpha_1\hdots\alpha_r}(q)\pa_{q^{\alpha_1}}\wedge\hdots\wedge\pa_{q^{\alpha_r}}$, its lift $\dT X$ is locally defined by
\begin{equation}\label{eqn:lift_coor}
\dT X(q,\dot q)=\frac{\pa X^{\alpha_1\hdots\alpha_r}(q)}{\pa q^\beta}\dot q^\beta\pa_{\dot q^{\alpha_1}}\wedge\hdots\wedge\pa_{\dot q^{\alpha_r}}+\sum_{m=1}^rX^{\alpha_1\hdots\alpha_r}(q)\pa_{\dot q^{\alpha_1}}\wedge\hdots\wedge\pa_{q^{\alpha_m}}\wedge\hdots\wedge\pa_{\dot q^{\alpha_r}}
\end{equation}
The general geometric definition can be found in \cite{GU1}. 
For us it will be sufficient to give it just in cases of fields of order $r=0$, 1 and 2. 

Let $f\in C^\infty(Q)$ be a \underline{smooth function} (a 0-vector field). We have simply 
\begin{equation}\label{eqn:lift_function}
\dT f(X)=\<\dd f(q),X>,\quad \text{where $X\in\T_qQ$};
\end{equation}
that is, $\dT f$ is a linear function on $\T Q$ defined by the differential $\dd f$.

A \underline{vector field} $V\in\VF(Q)$ can be understood as a map $\wh V:Q\ra\T Q$ covering $\id_Q$. The map $\wh{\dd_\T V}$ corresponding to the lifted vector field $\dd_\T V$ is a composition $\T \wh V\circ\kappa_Q:\T Q\ra\T\T Q$
\begin{equation}\label{eqn:lift_vector}
\xymatrix{
\T Q\ar[rr]^{\T\wh V}\ar[d]_{\id} & & \T \T Q\ar[rr]^{\kappa_Q}\ar[d]_{\T\tau_Q} &&\T\T Q\ar[d]_{\tau_{\T Q}}\\
\T Q \ar[rr]^{\id} & & \T Q\ar[rr]^\id & &\T Q
} .
\end{equation}

Another way of thinking about $\dd_\T V$ is the following. Let $\Phi^t:Q\ra Q$ be the (local) flow of $V$. The map $\T \Phi^t:\T Q\ra\T Q$ is also a (local) flow, hence it corresponds to some vector field on $\T Q$. This field is precisely $\dd_\T V$. 

Finally, a \underline{bi-vector field} $\Pi\in \VF^2(Q)$ can be associated with a vector bundle morphism $\wt\Pi:\T^\ast Q\ra\T Q$ given by contraction. The map $\wt{\dT \Pi}:\T^\ast \T Q\ra\T\T Q$ corresponding to $\dT \Pi\in\VF^2(\T Q)$  is defined by means of the following diagram:
\begin{equation}\label{eqn:lift_bivector}
\xymatrix{
\T^\ast\T Q \ar[rr]^{\wt{\dd_\T\Pi}}&& \T\T Q\\
\T\T^\ast Q \ar[u]^{\alpha_Q}\ar[rr]^{\T\wt\Pi}&& \T\T Q\ar[u]^{\kappa_Q}
}.
\end{equation}

\subsection{Tangent lift of an algebroid}

As was mentioned before, a skew-algebroid structure on $\tau:E\ra M$ is a linear bivector field $\Pi\in\VF^2(E^\ast)$. Now the tangent lift $\dT\Pi$ is a bivector field on $\T E^\ast$ linear w.r.t. $\T\pi:\T E^\ast\ra\T M$. Consequently, it defines a skew-algebroid structure on $\T\tau:\T E\ra\T M$. We will call it a \emph{tangent lift of an algebroid on $E$} and will denote it by $\dT E$. The reader can consult \cite{GU} for more information on $\dT E$. 

As has been shown in \cite{GU1}, $\dT$ commutes with the Schouten bracket, hence if $\Pi$ was Poisson then so is $\dT\Pi$. Consequently, a tangent lift of a Lie algebroid is again a Lie algebroid. One can show that the lift of an AL algebroid is an AL algebroid itself. 

In particular, $\T\T M$ posses two different Lie algebroid structures: the standard tangent bundle structure on $\tau_{\T M}:\T\T M\ra \T M$ and the lifted structure on $\T\tau_{M}:\T\T M\ra \T M$. The two are isomorphic via the map $\kappa_M$.

\paragraph{The structure of $\dT E$.} We shall now describe the structure (the morphism $\eps$, the anchor map, the relation $\kappa$) of the tangent lift Lie algebroid $\dd_\T E$. We will denote these objects by $\dT\eps$, $\dT\rho$ and $\dT\kappa$, respectively.

\begin{thm}[\cite{GU}]\label{thm:alg_lift}  For the tangent lift algebroid $\dd_\T E$ the morphism $\dd_\T \eps:\T^\ast \T E\ra \T\T E^\ast$  is defined by means of the following diagram
\begin{align}
&\xymatrix{\T^\ast\T E\ar[rr]^{\dd_\T\eps} &&\T\T E^\ast\\
\T\T^\ast E\ar[rr]^{\T\eps}\ar[u]^{\alpha_E}&& \T\T E^\ast\ar[u]^{\kappa_{E^\ast}}
}.\label{eqn:lift_eps}
\intertext{The anchor map  $\dd_\T \rho:\T E\ra\T\T M$ is the composition $\dd_T\rho=\kappa_M\circ\T\rho$ and the relation $\dd_\T \kappa:\T\T E\ra\T\T E$ is given by}
&\xymatrix{\T\T E && \T\T E\ar[ll]_{\dd_\T\kappa}\\
\T\T E \ar[u]^{\kappa_E}&& \T\T E\ar[ll]_{\T\kappa}\ar[u]^{\kappa_E}
}.\label{eqn:lift_kappa}
\end{align} 
\end{thm}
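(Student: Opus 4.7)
My plan is to derive all three formulas from the bivector description of the algebroid structure, combined with the definition of the tangent lift of a bivector field in \eqref{eqn:lift_bivector} and the standard interplay between the canonical isomorphisms $\RR$, $\alpha$, and $\kappa$ on iterated tangent/cotangent bundles.

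For \eqref{eqn:lift_eps}: by construction $\dT E$ is the skew-algebroid whose defining linear bivector on $\T E^\ast$ is $\dT\Pi$, so
\[
\dT\eps \;=\; \wt{\dT\Pi}\circ\RR_{\T E}:\T^\ast\T E\lra\T\T E^\ast,
\]
where $\RR_{\T E}$ is the canonical pairing isomorphism for the vector bundle $\T\tau:\T E\to\T M$ with dual $\T\pi:\T E^\ast\to\T M$. Applying \eqref{eqn:lift_bivector} to $\Pi\in\VF^2(E^\ast)$ gives $\wt{\dT\Pi} = \kappa_{E^\ast}\circ\T\wt\Pi\circ\alpha_{E^\ast}^{-1}$. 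The crucial additional input I would invoke is the compatibility identity
\[
\RR_{\T E} \;=\; \alpha_{E^\ast}\circ\T\RR_E\circ\alpha_E,
\]
a standard fact from Mackenzie--Tulczyjew double vector bundle theory, which can be checked either by a direct coordinate computation using the formulas of Section \ref{sec:coor} or, invariantly, by verifying that both sides are the unique DVB morphism intertwining the natural pairings on $\T\T^\ast E$ and $\T\T^\ast E^\ast$. Combining these identities with $\eps=\wt\Pi\circ\RR_E$ yields
\[
\dT\eps \;=\; \kappa_{E^\ast}\circ\T\wt\Pi\circ\alpha_{E^\ast}^{-1}\circ\RR_{\T E} \;=\; \kappa_{E^\ast}\circ\T\wt\Pi\circ\T\RR_E\circ\alpha_E \;=\; \kappa_{E^\ast}\circ\T\eps\circ\alpha_E,
\]
which is \eqref{eqn:lift_eps}.

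For the anchor, I would characterize it through its action on complete-lift sections: for $s:M\to E$ the tangent map $\T s:\T M\to\T E$ is a section of $\T\tau$, and using the compatibility of $\dT$ with the Schouten bracket together with the standard formula expressing the anchor as the $M$-projection of a $\Pi$-Hamiltonian vector field of a linear function, one checks that $\dT\rho$ sends $\T s$ to the complete lift $\kappa_M\circ\T\rho\circ\T s$ of $\rho\circ s$. Since such sections generate, this forces $\dT\rho=\kappa_M\circ\T\rho$ pointwise; the factor $\kappa_M$ is precisely what interchanges the two tangent-bundle structures on $\T\T M$, so that $\dT\rho$ covers $\id_{\T M}$ via $\tau_{\T M}$ rather than $\T\tau_M$. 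Finally, $\dT\kappa$ is by definition the relation dual to $\dT\eps$, and dualizing \eqref{eqn:lift_eps} --- using that $\kappa_E$ is dual to $\alpha_E$ via \eqref{eqn:kappa_alpha} and that $\T\kappa$ is dual to $\T\eps$ by naturality of the tangent functor --- produces precisely \eqref{eqn:lift_kappa}.

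The principal nontrivial step, and the main obstacle, is the compatibility identity $\RR_{\T E}=\alpha_{E^\ast}\circ\T\RR_E\circ\alpha_E$; while standard, its clean verification requires careful bookkeeping of the several vector bundle structures on $\T\T^\ast E$ and their interplay with the canonical exchange involutions. Everything else reduces to formal manipulation of DVB morphisms and their duals.
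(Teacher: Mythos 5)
Your argument for \eqref{eqn:lift_eps} and \eqref{eqn:lift_kappa} is essentially the paper's own proof: the same decomposition $\dT\eps=\wt{\dT\Pi}\circ\RR_{\T E}$, the same key compatibility square $\RR_{\T E}\circ\alpha_E=\alpha_{E^\ast}\circ\T\RR_E$ (which the paper likewise invokes without proof, as a standard double-vector-bundle fact, by "composing two commutative diagrams"), and the same duality argument identifying $\kappa_E^{-1}\circ\dT\kappa\circ\kappa_E$ with the relation dual to $\T\eps$. The only genuine divergence is the anchor: you characterize $\dT\rho$ by its action on complete lifts $\T s$ of sections via the Hamiltonian-vector-field description of the anchor and a generation argument, whereas the paper simply projects the already-established diagram \eqref{eqn:lift_eps} to the base of the double vector bundle, reading off $\dT\rho=\kappa_M\circ\T\rho$ from the facts that $\alpha_E$ covers $\id_{\T E}$, $\T\eps$ covers $\T\rho$ and $\kappa_{E^\ast}$ covers $\kappa_M$. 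The paper's route is shorter and avoids the step you leave implicit (that complete lifts span the fibres of $\T\tau$ over nonzero vectors, extended by continuity to the zero section); your route works but is sketchiest exactly there. One small slip of orientation: with the paper's convention $\alpha_E:\T\T^\ast E\ra\T^\ast\T E$, your compatibility identity should read $\RR_{\T E}=\alpha_{E^\ast}\circ\T\RR_E\circ\alpha_E^{-1}$ and correspondingly $\dT\eps=\kappa_{E^\ast}\circ\T\eps\circ\alpha_E^{-1}$; this only concerns which way $\alpha_E$ points and does not affect the substance.
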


\begin{proof}
To prove \eqref{eqn:lift_eps} observe that, by definition, $\dd_\T \eps$ is the composition of $\RR_{\T E}:\T^\ast\T E\ra\T^\ast\T E^\ast$ with $\wt{\dd_T\Lambda}:\T^\ast\T E^\ast\ra\T\T E^\ast$. Composing two commutative diagrams we get the following:
$$\xymatrix{
\T^\ast \T E\ar[rr]^{\RR_{\T E}} && \T^\ast\T E^\ast\ar[rr]^{\wt{\dd_\T\Lambda}}&&\T\T E^\ast\\
\T\T^\ast E\ar[rr]^{\T\RR_E}\ar[u]_{\alpha_E}&&\T\T^\ast E^\ast \ar[rr]^{\T\wt\Lambda}\ar[u]_{\alpha_{E^\ast}}&&\T\T E^\ast\ar[u]_{\kappa_E^\ast}
}.$$ 
Since the bottom row of the above diagram is $\T\wt\Lambda\circ\T\RR_E=\T(\wt\Lambda\circ\RR_E)=\T\eps$, we have \eqref{eqn:lift_eps}. The formula for $\dd_\T\rho$ is easily obtained by projecting $\dd_\T \eps$ to the base:
$$
\xymatrix{\T\T^\ast E\ar[rr]_{\alpha_E}\ar[d]^{\T^\ast\pi_E}\ar@/^10pt/[rrrrrr]^{\T\eps} &&\T^\ast\T E\ar[rr]_{\dd_\T\eps}\ar[d]^{\pi_{\T E}} &&\T\T E^\ast\ar[d]^{\T\T\pi} &&\ar[ll]^{\kappa_E} \T\T E^\ast \ar[d]^{\T\T\pi}\\
\T E\ar[rr]^{\id} \ar@/_10pt/[rrrrrr]_{\T\rho} &&\T E\ar[rr]^{\dT\rho}&& \T\T M&&\T\T M \ar[ll]_{\kappa_M}
}.$$

Finally, \eqref{eqn:lift_kappa} can be proved as follows. By definition, $\dd_\T\kappa$ is the relation dual to the DVB morphism $\dd_\T\eps$. On the other hand, one can show that the pair of isomorphisms $(\kappa_E,\kappa_E^\ast)$ preserves the parring $\<\cdot,\cdot>_{\T\T \tau}:\T\T E\times_{\T\T M}\T\T E^\ast\ra\R$ and that, by definition, the pair of isomorphisms $(\kappa_E^{-1},\alpha_E^{-1})$ maps the parring $\<\cdot,\cdot>_{\tau_{\T E}}:\T\T E\times_{\T E}\T^\ast \T E\ra\R$ to the parring $\<\cdot,\cdot>_{\T\tau_E}:\T\T E\times_{\T E}\T\T^\ast E\ra\R$ (equation \eqref{eqn:kappa_alpha}). It follows that relation $\kappa_E^{-1}\circ\dd_\T\kappa\circ\kappa_E$ is dual to the morphism $\kappa_{E^\ast}^{-1}\circ\dd_\T\eps\circ\alpha_E\overset{\eqref{eqn:lift_eps}}{=}\T\eps$. On the other hand relation $\T\kappa$ is dual to $\T\eps$, hence $\T\kappa=\kappa_E^{-1}\circ\dd_\T\kappa\circ\kappa_E$, which is precisely \eqref{eqn:lift_kappa}. 
\end{proof}

\section{EL equations and JVFs}\label{sec:EL}

Now we will recall (after \cite{GG,GGU}) the geometric construction of the EL equations for a Lagrangian system $L:E\ra\R$ on an algebroid $\tau:E\ra M$. Later we will define JVFs for such a system as the solutions of the tangent lift of these equations. Using the machinery introduced in Section \ref{sec:geom}, we will be able to interpret JVFs as solutions of the lifted Lagrangian system $\dT L:\T E\ra\R$ on the tangent lift algebroid $\T\tau:\T E\ra\T M$.

\paragraph{Lagrangian dynamics}
By a \emph{Lagrangian system} on an algebroid $\tau:E\ra M$ we will understand a smooth function $L:E\ra\R$. Given such $L$ we can construct two maps $\lambda_L=\tau_{E^\ast}\circ\eps\circ\dd L=\T^\ast\tau\circ\dd L:E\ra E^\ast$ (the \emph{Legendre mapping}) and $\Lambda_L=\eps\circ\dd L:E\ra \T E^\ast$ (the \emph{Tulczyjew differential}) as shown on the diagram
 \begin{equation}\label{diag:EL}\xymatrix{
\T^\ast E  \ar[d]^{\pi_E}\ar[rr]^{\eps}&& \sT E^\ast \ar[d]^{\tau_{E^\ast}} \\
E\ar @{-->}[rr]^{\lambda_L}\ar @/^1pc/[u]^{\dd L} \ar@{-->}[urr]^{\Lambda_L} && E^\ast }.
\end{equation}
We define the \emph{Euler-Lagrange} (\emph{EL}) \emph{equation} for a curve $\gamma:[t_0,t_1]\ra E$ as follows:
\begin{equation}\label{eqn:EL}
\frac{\dd}{\dd t}(\lambda_L(\gamma(t)))=\Lambda_L(\gamma(t)).
\end{equation}
In other words $\gamma(t)$ is a solution of the EL equation if   $\Lambda_L(\gamma(t))$ is the tangent prolongation of $\lambda_L(\gamma(t))$. 

Observe that every such $\gamma$ is automatically admissible. Indeed,
we need a simple diagram chasing on \eqref{diag:eps} to see that $\T\pi\circ\Lambda_L(\gamma)=\rho(\gamma)$ and $\pi\circ\lambda_L(\gamma)=\tau\circ\gamma$, hence on the base level we get
$$\frac{\dd}{\dd t}\left(\tau\circ\gamma(t)\right)=\frac{\dd}{\dd t}\left(\pi\circ \lambda_L(\gamma(t))\right)=\T\pi\circ\left(\frac{\dd}{\dd t} \lambda_L(\gamma(t))\right)\overset{\eqref{eqn:EL}}=\T\pi\circ\Lambda_L(\gamma(t))=\rho(\gamma(t)).$$
 
Note also that for any admissible curve $\gamma$ (not necessarily a solution of \eqref{eqn:EL}) the difference $\Lambda_L(\gamma(t))-\frac{\dd}{\dd t}(\lambda_L(\gamma(t)))$ is a vertical vector. Indeed, 
$$\T\pi\circ\left(\Lambda_L(\gamma(t))-\frac{\dd}{\dd t} \lambda_L(\gamma(t))\right)=\T\pi\circ\Lambda_L(\gamma(t))-\frac{\dd}{\dd t}\left(\pi\circ \lambda_L(\gamma(t))\right)=\rho(\gamma(t))-\frac{\dd}{\dd t}\left(\tau\circ\gamma(t)\right)=0.$$

\paragraph{Variational interpretation of EL equations.}
Consider a natural action functional $S_L(\cdot)$ associated with a Lagrangian function $L:E\ra \R$: 
$$\gamma:[t_0,t_1]\ra E\quad\quad \longmapsto \quad\quad S_L(\gamma):=\int_{t_0}^{t_1}L(\gamma(t))\dd t.$$
The first variation of $S_L(\cdot)$ at $\gamma$ in the direction of an infinitesimal variation $\del\gamma$ is defined as
$$\<\dd S_L(\gamma),\del\gamma>:=\int_{t_0}^{t_1}\<\dd L(\gamma(t)),\del\gamma(t)>\dd t=\int_{t_0}^{t_1}\dd_TL(\del\gamma(t))\dd t.$$
The following result is according to Grabowska and Grabowski \cite{GG}.

\begin{thm}\label{thm:lagr_var_princ} For an admissible curve $\gamma:[t_0,t_1]\ra E$ and every infinitesimal admissible variation $\del_\xi\gamma$, the following equality holds:
\begin{equation}\label{eqn:EL_var}
\<\dd S_L(\gamma),\del_\xi\gamma>=\int_{t_0}^{t_1}\<\xi(t),\VV_{\Lambda_L(\gamma(t))-\frac{\dd}{\dd t} \lambda_L(\gamma(t))}>\dd t+\<\xi(t),\lambda_L(\gamma(t))>\bigg|^{t_1}_{t_0}.
\end{equation}
Here $\VV_{\Lambda_L(\gamma(t))-\frac{\dd}{\dd t} \lambda_L(\gamma(t))}\in E^\ast$ denotes the vertical part of the vertical vector $\Lambda_L(\gamma(t))-\frac{\dd}{\dd t} \lambda_L(\gamma(t)) \in \V E^\ast\subset\T E^\ast$, i.e. its image under canonical identification $\V E^\ast\approx E^\ast$. 
\end{thm}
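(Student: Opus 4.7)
The plan is to combine the $\eps$--$\kappa$ duality built into the algebroid structure with a single integration by parts, the boundary term being controlled by a Leibniz rule for the tangent pairing. First I would unpack the definitions: by \eqref{eqn:lift_function},
\[
\<\dd S_L(\gamma),\del_\xi\gamma>=\int_{t_0}^{t_1}\<\dd L(\gamma(t)),\del_\xi\gamma(t)>_{\tau_E}\,\dd t.
\]
Since $\del_\xi\gamma(t)=\kappa_{\gamma(t)}(\dot\xi(t))$ and $\Lambda_L=\eps\circ\dd L$, the fact that $\kappa$ is the relation dual to $\eps$, applied to the covector $\dd L(\gamma(t))\in\T^\ast_{\gamma(t)}E$, yields
\[
\<\dd L(\gamma(t)),\del_\xi\gamma(t)>_{\tau_E}=\<\dot\xi(t),\Lambda_L(\gamma(t))>_{\T\tau};
\]
admissibility of $\gamma$ and the condition $\tau\circ\xi=\tau\circ\gamma$ force $\T\tau(\dot\xi(t))=\T\pi(\Lambda_L(\gamma(t)))=\rho(\gamma(t))$, so the $\T\tau$-pairing on the right is legitimate.

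The core of the argument is a single integration by parts, based on the following Leibniz rule for the tangent pairing: whenever $\xi(t)\in E_{x(t)}$ and $p(t)\in E^\ast_{x(t)}$ are sections of $E$ and $E^\ast$ along a common base curve $x(t)\in M$, one has
\[
\frac{\dd}{\dd t}\<\xi(t),p(t)>=\<\dot\xi(t),\dot p(t)>_{\T\tau}.
\]
This is a general property of the DVB pairing $\T E\times_{\T M}\T E^\ast\to\R$ and is immediate in adapted coordinates. Applied to $p(t):=\lambda_L(\gamma(t))$, whose base curve $\pi\circ\lambda_L(\gamma)=\tau\circ\gamma$ coincides with that of $\xi$, integration over $[t_0,t_1]$ gives
\[
\int_{t_0}^{t_1}\<\dot\xi(t),\tfrac{\dd}{\dd t}\lambda_L(\gamma(t))>_{\T\tau}\,\dd t=\<\xi(t),\lambda_L(\gamma(t))>\Big|_{t_0}^{t_1}.
\]
Subtracting this from the previous expression for $\<\dd S_L(\gamma),\del_\xi\gamma>$ produces the boundary term of \eqref{eqn:EL_var} and leaves the integrand $\<\dot\xi,\Lambda_L(\gamma)-\tfrac{\dd}{\dd t}\lambda_L(\gamma)>_{\T\tau}$.

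It remains to identify this integrand with $\<\xi,\VV_{\Lambda_L-\frac{\dd}{\dd t}\lambda_L}>$. The vector $w(t):=\Lambda_L(\gamma(t))-\frac{\dd}{\dd t}\lambda_L(\gamma(t))$ is vertical (as noted right after \eqref{eqn:EL}) and therefore lies in $\V_{\lambda_L(\gamma)}E^\ast\cong E^\ast_{\tau\circ\gamma}$. Since $\Lambda_L(\gamma)$ and $\frac{\dd}{\dd t}\lambda_L(\gamma)$ have the same basepoint $\lambda_L(\gamma)\in E^\ast$, the two pairings $\<\dot\xi,\Lambda_L>_{\T\tau}$ and $\<\dot\xi,\tfrac{\dd}{\dd t}\lambda_L>_{\T\tau}$ contain identical contributions coming from that common basepoint, so their difference reduces to the ordinary $E$--$E^\ast$ pairing of $\tau_E(\dot\xi)=\xi$ with $\VV_w$ under the identification $\V E^\ast\cong E^\ast$.

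The main technical subtlety is precisely this last reduction: $\dot\xi$ and $w$ project under $\T\tau$ and $\T\pi$ to distinct elements of $\T M$ (to $\rho(\gamma)$ and to $0$, respectively), so the expression $\<\dot\xi,w>_{\T\tau}$ has to be read as the difference of two genuine $\T\tau$-pairings rather than as a fibrewise pairing over a common basepoint in $\T M$. Once this coordinate-free interpretation of the vertical restriction of the DVB pairing is recorded, the theorem follows by linearity and the fundamental theorem of calculus; everything else is bookkeeping.
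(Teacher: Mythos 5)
The paper itself gives no proof of this theorem --- it is quoted from Grabowska--Grabowski \cite{GG} --- so there is nothing in the text to compare against. Your argument is correct and is the natural one: rewrite the integrand through the $\eps$--$\kappa$ duality as a $\T\tau$-pairing with $\Lambda_L(\gamma)$, integrate by parts using the Leibniz rule $\frac{\dd}{\dd t}\<\xi(t),p(t)>=\<\dot\xi(t),\dot p(t)>_{\T\tau}$ for the tangent pairing, and identify the vertical remainder. All three steps check out in the adapted coordinates of Section \ref{sec:coor}, and you correctly flag the one genuine subtlety at the end, namely that $\<\dot\xi,w>_{\T\tau}$ for the $\tau_{E^\ast}$-vertical vector $w=\Lambda_L(\gamma)-\frac{\dd}{\dd t}\lambda_L(\gamma)$ only makes sense as a difference of two honest pairings over $\T M$.

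One step deserves an explicit justification. The defining property of $\kappa$ as the relation dual to $\eps$ pairs covectors $A\in\T^\ast_{\tau_E(Y)}E$; with $Y=\dot\xi$ this reads $\<\del_\xi\gamma,\eps(A)>_{\T\tau}=\<\dot\xi,A>_{\tau_E}$ for $A$ sitting at $\xi(t)$, whereas you apply the identity to $A=\dd L(\gamma(t))\in\T^\ast_{\gamma(t)}E$ with the roles of $\dot\xi$ and $\del_\xi\gamma$ interchanged. The swapped identity is true, but it is not literally the definition: it amounts to the symmetry of the relation $\kappa$, which holds precisely because the structure functions $c^j_{kl}$ are skew --- in \eqref{eqn:kappa_coor}, exchanging the roles of $y$ and $Y$ turns $\dot Y^j+c^j_{kl}y^kY^l$ back into $\dot Y^j$ --- and because admissibility of $\gamma$ makes both $\T\tau$-compatibility conditions hold. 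You should either invoke this symmetry explicitly or verify the identity $\<\dd L(\gamma),\del_\xi\gamma>_{\tau_E}=\<\dot\xi,\Lambda_L(\gamma)>_{\T\tau}$ directly in coordinates; with that line added the proof is complete.
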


\begin{remark}
\label{rem:lagr_var_princ} Observe that \eqref{eqn:EL_var} implies that for an admissible curve $\gamma:[t_0,t_1]\ra E$ the following conditions are equivalent:
\begin{enumerate}
\item The curve $\gamma$ is a solution of the Euler-Lagrange equation for the Lagrangian system $L:E\ra\R$ on the skew-algebroid $\tau:E\ra M$,
\item For every infinitesimal admissible variation $\del_\xi\gamma$, we have 
$$\<\dd S_L(\gamma),\del_\xi\gamma>=\<\xi(t),\lambda_L(\gamma(t))>\bigg|^{t_1}_{t_0}\,,$$
\item For every infinitesimal admissible variation $\del_\xi\gamma$ with vanishing end-points, we have 
$$\<\dd S_L(\gamma),\del_\xi\gamma>=0.$$
\end{enumerate} 
\end{remark}

\begin{remark}\label{rem:var_formal}
The above result can be understood as a variational formulation of the EL equations. Note, however, that it is in some sense different form the ''natural'' variational formulation. In the ''natural'' approach an admissible curve $\gamma(t)$ is an extremal of the action $S_L(\cdot)$ if for every 1-parameter family (homotopy, variation) of admissible curves $t\mapsto\gamma(t,s)$ such that $\gamma(t,0)=\gamma(t)$ (and perhaps satisfying additional conditions such as fixing the end-points) a point $s=0$ is a critical value of the function $s\mapsto S_L(\gamma(\cdot,s))$, i.e.  
$$\<\dd S_L(\gamma),\pa_s\big|_0\gamma(\cdot,s)>=0.$$

In Theorem \ref{thm:lagr_var_princ} and Remark \ref{rem:lagr_var_princ} we present however the ''formal'' approach -- we assume vanishing of $\dd S_L(\gamma)$ on infinitesimal variations of $\gamma$ of certain type ($\del_\xi\gamma$ -- infinitesimal admissible variations with vanishing end-points) without assuring ourselves that these variations are generated by homotopies. Of course from Lemma \ref{lem:adm_var} we know that if $E$ is an AL algebroid then every infinitesimal admissible variation is generated by a homotopy, and hence the ''natural'' and the ''formal'' approach coincide.
In general (if $E$ is not AL) the geometric EL equations \eqref{eqn:EL} are only ''formally'' variational. 
\end{remark}

\paragraph{JVFs.} Now we will give the most important definition of this paper. To motivate it, consider an $s$-parameterized family of solutions of \eqref{eqn:EL}  $t\mapsto\gamma(t,s)$. Then,
$$\frac{\dd}{\dd s}\bigg|_0\frac{\dd}{\dd t}\lambda_L(\gamma(t,s))=\frac{\dd }{\dd s}\Lambda_L(\gamma(t,s))=\T\Lambda_L(\pa_s\big|_0\gamma(t,s)).$$
Now, after using $\kappa_{E^\ast}$ to inverse the order of derivatives on the left-hand side, we get
$$\kappa_{E^\ast}\circ \T\Lambda_L(\pa_s\big|_0\gamma(t,s))=\frac{\dd}{\dd t}\frac{\dd}{\dd s}\bigg|_0\lambda_L(\gamma(t,s))=\frac{\dd}{\dd t}\left(\T\lambda_L(\pa_s\big|_0\gamma(t,s))\right),$$
that is $J(t):=\pa_s\big|_0\gamma(t,s)$ satisfies 
\begin{equation}\label{eqn:Jacobi}
 \frac{\dd}{\dd t}\T\lambda_L(J(t))=\kappa_{E^\ast}\circ\T\Lambda_L(J(t)).
\end{equation}
In other words, the above equation is the tangent prolongation of \eqref{eqn:EL}. Obviously, it is an implicit linear differential equation of the first order. It is tempting to call \eqref{eqn:Jacobi} Jacobi equation, yet this notion would not agree with \eqref{eqn:Jacobi_geod}. The reason is that in the classical geodesic problem we treat the geodesic equation not as the equation for an admissible curve $\gamma=\dot\zeta\in\T M$ but as an equation for the base curve $\zeta\in M$. Therefore we do not look for arbitrary variations of $\gamma=\dot\zeta$ but only for admissible ones (i.e. variations generated by the true variations on the base). This leads us to the following formulation.

\begin{definition}\label{def:Jacobi}
Let $\gamma(t)$ be a solution of the EL equation \eqref{eqn:EL} for a given Lagrangian system $L:E\ra\R$  on a skew-symmetric algebroid $E$. A \emph{Jacobi vector field} (\emph{JVF}) along $\gamma(t)$ is a curve $\eta(t)\in E_{\tau\circ\gamma(t)}$ such that the associated admissible variation $\del_\eta\gamma$ satisfies \eqref{eqn:Jacobi}, i.e. 
\begin{equation}\label{eqn:Jacobi_2}
 \frac{\dd}{\dd t}\T\lambda_L(\del_{\eta(t)}\gamma(t))=\kappa_{E^\ast}\circ\T\Lambda_L(\del_{\eta(t)}\gamma(t)).
\end{equation}
Equation \eqref{eqn:Jacobi_2} will be called the \emph{Jacobi equation}.

We say that the points $\gamma(t_0)$ and $\gamma(t_1)$ are \emph{conjugated along $\gamma$} if there exists a non-zero JVF $\eta(t)\in E_{\tau\circ\gamma(t)} $ such that $\eta(t_0)=\eta(t_1)=0$. Obviously, such JVFs form a vector space. Its dimension will be called the \emph{multiplicity} of conjugate points $\gamma(t_0)$ and $\gamma(t_1)$.
\end{definition}

Observe that, since $\eta\mapsto\del_\eta\gamma$ is linear and \eqref{eqn:Jacobi} is linear in $J$, \eqref{eqn:Jacobi_2} is a second order linear implicit differential equation for $\eta$. To see that \eqref{eqn:Jacobi_2} coincides with \eqref{eqn:Jacobi_geod} in the Riemannian case see Section \ref{sec:examples}.

\begin{remark}\label{rem:Jacobi_adm} Let us now discuss the admissibility of solutions of \eqref{eqn:Jacobi} and \eqref{eqn:Jacobi_2}.  Since every solution of \eqref{eqn:EL} was admissible, every solution $J\in\T_\gamma E$ of its tangent prolongation \eqref{eqn:Jacobi} belongs to $\T_\gamma\Adm([t_0,t_1],E)$. Indeed, an application of $\T\T\pi$ to \eqref{eqn:Jacobi} gives
\begin{equation}\label{eqn:adm_prolongation}
\frac{\dd}{\dd t}\left(\T\tau\circ J(t)\right)=\kappa_M\circ\T\rho(J(T))=\dT\rho(J(t)), 
\end{equation}
which is the tangent prolongation of \eqref{eqn:adm} and hence the condition for $J\in\T_\gamma E$ to belong to $\T_\gamma\Adm([t_0,t_1],E)$. 

Now, if $E$ is AL, then (Lemma \ref{lem:adm_var}) every solution of \eqref{eqn:Jacobi} is of the form $J=\del_\eta\gamma$. In general, if $E$ is not AL, then there can be no solutions of \eqref{eqn:Jacobi} of the form $J=\del_\eta\gamma$. Indeed, recall that $\T\tau\circ\del_\eta\gamma=\rho(\eta)$, hence $\frac{\dd}{\dd t}\left(\T\tau\circ\del_\eta\gamma\right)=\T\rho(\dot\eta)$. Consequently, 
$$\T\rho(\dot\eta)=\frac{\dd}{\dd t}\left(\T\tau\circ\del_\eta\gamma\right)=\kappa_M\circ\T\rho(\del_\eta\gamma)=\kappa_M\circ\T\rho(\kappa_\gamma(\dot\eta))$$
(i.e. $J=\del_\eta\gamma$ satisfies \eqref{eqn:adm_prolongation}) if and only if $$\kappa_M\circ \T\rho(\dot\eta)=\T\rho\circ\kappa(\dot\eta).$$ 
The condition $\kappa_M\circ \T\rho=\T\rho\circ\kappa$  means that $\T\rho$ maps $\kappa$ to $\kappa_M$, that is $\rho$ is a morphism of algebroids $E$ and $\T M$ (in other words, $E$ is AL). 
\end{remark}

\paragraph{The main result.}
It turns out that an admissible variation associated with a JVF is again a solution of the EL equation.
\begin{thm}\label{thm:Jacobi}
Equation \eqref{eqn:Jacobi} is an Euler-Lagrange equation corresponding to the Lagrangian $\dd_\T L:\T E\ra\R$ and the tangent lift algebroid structure $\dd_\T E$. Consequently, if $\eta$ is a JVF along $\gamma$, then the associated admissible variation $\del_\eta\gamma$ is a solution of this EL equation. 
\end{thm}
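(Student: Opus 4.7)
\medskip

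\noindent\textbf{Proof proposal.} The plan is to compute the Legendre map $\lambda_{\dT L}$ and the Tulczyjew differential $\Lambda_{\dT L}$ associated with the Lagrangian $\dT L$ on the algebroid $\dT E$, and to show directly that
$$
\Lambda_{\dT L}=\kappa_{E^\ast}\circ\T\Lambda_L,
\qquad
\lambda_{\dT L}=\T\lambda_L,
$$
as maps $\T E\to\T\T E^\ast$ and $\T E\to\T E^\ast$, respectively. Once these two identities are established, the EL equation for $\dT L$ on $\dT E$, namely $\tfrac{\dd}{\dd t}\lambda_{\dT L}(J(t))=\Lambda_{\dT L}(J(t))$, becomes literally \eqref{eqn:Jacobi}, and the theorem follows: any JVF $\eta$ along $\gamma$ produces a solution $J=\del_\eta\gamma$ of \eqref{eqn:Jacobi} by Definition \ref{def:Jacobi}, hence of the EL equation for $\dT L$.

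\smallskip

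The computation of $\Lambda_{\dT L}$ rests on two ingredients. The first is a standard property of the tangent lift of a function, which I would either cite from \cite{GU1} or verify quickly in coordinates using \eqref{eqn:lift_function}: for any smooth $f:Q\to\R$ one has
$$
\dd(\dT f)=\alpha_Q\circ\T\dd f,
$$
as maps $\T Q\to\T^\ast\T Q$. Applied to $f=L$ on $Q=E$, this gives $\dd(\dT L)=\alpha_E\circ\T\dd L$. The second ingredient is Theorem \ref{thm:alg_lift}, precisely the identity \eqref{eqn:lift_eps}, which reads $\dT\eps\circ\alpha_E=\kappa_{E^\ast}\circ\T\eps$. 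Combining these,
$$
\Lambda_{\dT L}=\dT\eps\circ\dd(\dT L)=\dT\eps\circ\alpha_E\circ\T\dd L=\kappa_{E^\ast}\circ\T\eps\circ\T\dd L=\kappa_{E^\ast}\circ\T\Lambda_L,
$$
which is the first identity.

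\smallskip

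The computation of $\lambda_{\dT L}=\tau_{\T E^\ast}\circ\Lambda_{\dT L}$ then reduces to checking that the DVB-isomorphism $\kappa_{E^\ast}$ intertwines the two canonical projections of $\T\T E^\ast$, i.e.\ $\tau_{\T E^\ast}\circ\kappa_{E^\ast}=\T\tau_{E^\ast}$; this is precisely the content of the commutative square defining $\kappa_{E^\ast}$ in the diagram preceding \eqref{eqn:kappa_alpha} (applied with $Q=E^\ast$). Combining with the previous step,
$$
\lambda_{\dT L}=\tau_{\T E^\ast}\circ\kappa_{E^\ast}\circ\T\Lambda_L=\T\tau_{E^\ast}\circ\T\Lambda_L=\T\lambda_L,
$$
which is the second identity. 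Substituting both into the EL equation for $\dT L$ on $\dT E$ yields \eqref{eqn:Jacobi}, and the last assertion about $\del_\eta\gamma$ is then immediate from Definition \ref{def:Jacobi}.

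\smallskip

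The only non-cosmetic step is the first ingredient $\dd(\dT f)=\alpha_Q\circ\T\dd f$, so that is where I expect the main friction: one either appeals to \cite{GU1} (where it appears as the compatibility of $\dT$ with $\dd$ on the tensor algebra, via the pairing of $\alpha_Q$ with $\kappa_Q$ in \eqref{eqn:kappa_alpha}), or one checks it in the coordinates of Section \ref{sec:coor} by noting that both sides send $(q,\dot q;\delta q,\delta\dot q)\in\T\T Q$ to the 1-form whose components are $(\partial_\alpha f,\,\partial_\alpha\partial_\beta f\,\dot q^\beta)$ at $(q,\dot q)$. Everything else is diagram chasing on the double-vector-bundle diagrams that are already laid out in Section \ref{sec:geom}.
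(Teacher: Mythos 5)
Your proposal is correct and follows essentially the same route as the paper: both arguments reduce the claim to the two identities $\dd(\dT L)=\alpha_E\circ\T\dd L$ and $\lambda_{\dT L}=\T\lambda_L$, and both obtain $\Lambda_{\dT L}=\kappa_{E^\ast}\circ\T\Lambda_L$ from \eqref{eqn:lift_eps}. The only cosmetic difference is that the paper verifies $\dd(\dT L)=\alpha_E\circ\T\dd L$ intrinsically by representing a vector of $\T\T E$ by a homotopy and commuting the two derivatives, whereas you defer this to a citation or a coordinate check; the rest is the same diagram chase.
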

\begin{proof}
Euler Lagrange equations can be constructed via a well-defined geometric procedure as described on diagram \eqref{diag:EL}. Since, by Theorem \ref{thm:alg_lift}, we know how to describe the structure of the tangent lift algebroid $\dd_\T E$, the geometric construction of the Euler-Lagrange dynamics for $\dd_\T L$ on $\dd_\T E$ is the following:
$$\xymatrix{
\T^\ast\T\ar[rr]^{\dd_\T\eps} E && \T\T E^\ast\\
\T\T^\ast E\ar[u]_{\alpha_E}\ar[rr]^(.4){\T\eps} \ar[d]^{\T\tau_E} && \T\T E^\ast\ar[u]_{\kappa_{E^\ast}}\ar[d]^{\T\tau_{E^\ast}}\\
\T E\ar @/^2pc/ [uu]^{\dd\dd_\T L} \ar@{-->}[uurr]^(.7){\Lambda_{\dd_\T L}} 
\ar @{-->}[rr]^{\lambda_{\dd_\T L}} &&\T E^\ast
}.
$$
On the other hand, \eqref{eqn:Jacobi} can be described as follows:
$$\xymatrix{
\T^\ast\T\ar[rr]^{\dd_\T\eps} E && \T\T E^\ast\\
\T\T^\ast E\ar[u]_{\alpha_E}\ar[rr]^(.4){\T\eps} \ar[d]^{\T\tau_E} && \T\T E^\ast\ar[u]_{\kappa_{E^\ast}}\ar[d]^{\T\tau_{E^\ast}}\\
\T E\ar @/^1pc/[u]^{\T\dd L} \ar @{-->}[uurr] \ar @{-->}[urr]^(.6){\T\Lambda_L} \ar @{-->}[rr]^{T\lambda_L} &&\T E^\ast
}.
$$
To prove the assertion it is enough to check that $\alpha_E\circ(\T\dd L)=\dd (\dd_\T L)$ and $\T \lambda_L=\lambda_{\dd_T L}$. To justify the first equality take $\A\in\T\T E$ represented by a homotopy $\gamma(t,s)\in E$. It is easy to see that 
$$\<\dd(\dT L),\A>_{\tau_{\T E}}=\frac{\dd}{\dd s}\bigg|_0\frac{\dd}{\dd t}\bigg|_0L(\gamma(t,s)).$$
Now,
$$\<\alpha_E\circ \T\dd L,\A>_{\tau_{\T E}}\overset{\eqref{eqn:kappa_alpha}}=\<\T\dd L,\kappa_E(\A)>_{\T\tau_{E}}=\frac{\dd}{\dd t}\bigg|_0\frac{\dd}{\dd s}\bigg|_0L(\gamma(t,s)),$$
since $\kappa_E(\A)$ is represented by $\gamma(s,t)$. As the derivatives commute, 
$$\<\dd(\dT L),\A>_{\tau_{\T E}}=\<\alpha_E\circ \T\dd L,\A>_{\tau_{\T E}}.$$
Finally, observe that 
\begin{align*} 
\T\lambda_L=\T\left(\tau_{E^\ast}\circ\eps\circ\dd L\right)=\T\tau_{E^\ast}\circ\T\eps\circ\T\dd L= \left(\T\tau_{E^\ast}\circ\kappa_{E^\ast}\right)\circ\left(\kappa_{E^\ast}\circ\T\eps\right)\circ\T\dd L\overset{\eqref{eqn:lift_eps}}=\\
\tau_{\T E^\ast}\circ\dT\eps\circ\alpha_E\circ\T\dd L=\tau_{\T E^\ast}\circ\dT\eps\circ\dd\dT L=\lambda_{\dT L}.
\end{align*}
\end{proof}

\section{Second order optimality conditions}\label{sec:variations}

In this section we shall study the relation of JVFs and variational principles. Our main result is Theorem \ref{thm:variations} which shows the link between the presence of conjugate points and null vectors of the bilinear form $\del^2 S_L$ -- the second variation of the action $S_L$. The result is proved almost tautologically thanks to the interpretation of JVFs and Jacobi equation in terms of EL-equation given in Theorem \ref{thm:Jacobi}. Much attention is, however, needed to give a proper definition of the second variation of $S_L$ in the case of a Lagrangian system on an algebroid. Therefore we will start form general considerations about the second variation of a function at its critical point (or a critical point relative to a VB morphism).

Throughout this section we will be working in the AL algebroid setting. It turns out that this is needed for the definition of the second variation to work well. Otherwise we have no control of the vectors tangent to the space of admissible curves in $E$. Moreover, the symmetry of $\del^2 S_L$ requires the Lie algebroid setting (see Lemma \ref{lem:symmetry}). 

\paragraph{What is the second variation of a function?} 
Consider a smooth function
 $F:Q\ra\R$ defined on $Q$ (it can be either a finite-dimensional or a Banach manifold). Naively one defines the second variation of $F$ at $q$ in the directions $X,Y\in\T_qQ$ as
$$\del^2 F(q)(X,Y)=\frac{\dd}{\dd s_1}\bigg|_0\frac{\dd}{\dd s_2}\bigg|_0 F(q(s_1,s_2)),$$
where $q(s_1,s_2)$ is such that $X\in\T_qQ$ is a vector represented by the curve $s_1\mapsto q(s_1,0)$ and $Y\in\T_qQ$ is represented by $s_2\mapsto q(0,s_2)$. In fact, $\frac{\dd}{\dd s_1}\big|_0\frac{\dd}{\dd s_2}\big|_0 F(q(s_1,s_2))=\dT\dT F(\A)$, where $\A\in\T\T Q$ is a vector corresponding to the homotopy $(s_1,s_2)\mapsto q(s_1,s_2)$.  Observe that since $\frac{\dd}{\dd s_1}$ and $\frac{\dd}{\dd s_2}$ commute, then $\dT\dT F(\A)=\dT\dT F(\kappa_Q(\A))$. 

To make sure that the second variation of $F$ at $q$ is well-defined, we should therefore show that given any other $\A^{'}\in\T\T Q$ such that $\tau_{\T Q}(\A^{'})=\tau_{\T Q}(\A)=Y$ and $\T\tau_Q(\A^{'})=\T\tau_Q(\A)=X$ we have 
$$\dT\dT F(\A^{'})=\dT\dT F(\A),$$
i.e. that the value of $\dT\dT F(\A)$ depends on $\T Q$-projections of $\A$ only. But,
$$\dT\dT F(\A)-\dT\dT F(\A^{'})=\<\dd\dT F, \A-\A^{'}>,$$
where we subtract w.r.t. the vector bundle structure $\tau_{\T Q}$ on $\T\T Q$. Since $\A$ and $\A^{'}$ have the same $\T\tau_Q$-projections, $\A-\A^{'}\in\T_Y\T Q$ is vertical w.r.t. $\T\tau_Q$, and hence in the canonical identification $\V\T Q\approx\T Q\times_Q\T Q$ it corresponds to a pair $(Y,Z)$. A simple calculation shows that 
$$\<\dd\dT F,\A-\A^{'}>=\dT F(Z)=\<\dd F(q),Z>.$$
Consequently, $\del^2 F(q)$ is well-defined if and only if $\dd F(q)=0$, i.e. $q$ is a critical point of $F$. 

We can summarize our considerations as follows:
\begin{prop}\label{prop:second_var}
Assume that $q\in Q$ is a critical point of $F$. Then the second variation of $F$ at $q$ in the directions $X,Y\in\T_q Q$ is well-defined via the formula:
\begin{equation}\label{eqn:second_var}
\del^2F(q)(X,Y):=\dT\dT F(\A),
\end{equation}
where $\A\in\T\T_qQ$ is any vector projecting to $X$ and $Y$ under $\T\tau_Q$ and $\tau_{\T Q}$, respectively. 

Moreover the second variation is symmetric, i.e. 
$$\del^2F(q)(X,Y)=\del^2F(q)(Y,X).$$
\end{prop}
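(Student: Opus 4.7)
The plan is to verify the two claims—well-definedness of \eqref{eqn:second_var} and symmetry of $\del^2F(q)$—by exploiting the geometry of the double tangent bundle $\T\T Q$ together with the canonical involution $\kappa_Q$ and the hypothesis $\dd F(q)=0$. The paragraph preceding the statement of the proposition already sketches the main ideas, so the task is to organise them cleanly.

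First I would address well-definedness. Take two lifts $\A,\A^{'}\in\T_Y\T Q$ with matching projections $\T\tau_Q(\A)=\T\tau_Q(\A^{'})=X$ and $\tau_{\T Q}(\A)=\tau_{\T Q}(\A^{'})=Y$. Working in the vector bundle structure $\tau_{\T Q}\colon\T\T Q\to\T Q$, the difference $\A-\A^{'}$ is a well-defined element of $\T_Y\T Q$; since $\T\tau_Q$ is a linear map over this structure and annihilates the difference, $\A-\A^{'}$ is vertical with respect to $\T\tau_Q$, hence under the canonical identification $\V\T Q\cong \T Q\times_QT Q$ it corresponds to a pair $(Y,Z)$ with $Z\in\T_qQ$. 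Using the definition \eqref{eqn:lift_function} of the tangent lift on functions (so that $\dT\dT F(\A)=\<\dd(\dT F)(Y),\A>$), I would then compute
\begin{equation*}
\dT\dT F(\A)-\dT\dT F(\A^{'})=\<\dd(\dT F)(Y),\A-\A^{'}>=\dT F(Z)=\<\dd F(q),Z>,
\end{equation*}
which vanishes because $q$ is a critical point of $F$. Hence the value $\dT\dT F(\A)$ depends only on the projections $X$ and $Y$.

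Next I would establish symmetry using $\kappa_Q$. Represent $\A$ by a homotopy $(s_1,s_2)\mapsto q(s_1,s_2)$ in such a way that $\tau_{\T Q}(\A)=Y$ is represented by $s_2\mapsto q(0,s_2)$ and $\T\tau_Q(\A)=X$ by $s_1\mapsto q(s_1,0)$. By the geometric description of $\kappa_Q$ recalled in Section~\ref{sec:geom}, $\kappa_Q(\A)$ is represented by the swapped homotopy $(s_1,s_2)\mapsto q(s_2,s_1)$, so that $\tau_{\T Q}(\kappa_Q(\A))=X$ and $\T\tau_Q(\kappa_Q(\A))=Y$. Commutativity of mixed partial derivatives then yields
\begin{equation*}
\dT\dT F(\A)=\pa_{s_1}\big|_0\pa_{s_2}\big|_0F(q(s_1,s_2))=\pa_{s_2}\big|_0\pa_{s_1}\big|_0F(q(s_1,s_2))=\dT\dT F(\kappa_Q(\A)).
\end{equation*}
Since $\kappa_Q(\A)$ is a legitimate representative for the evaluation of $\del^2F(q)(Y,X)$, the already proved well-definedness gives $\del^2F(q)(X,Y)=\del^2F(q)(Y,X)$.

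The main obstacle will be the bookkeeping in the well-definedness step: one must be unambiguous about which of the two vector bundle structures on $\T\T Q$ is used to form the difference $\A-\A^{'}$ (it has to be $\tau_{\T Q}$, since both vectors share the basepoint $Y$ there), and then carefully translate the identification $\V\T Q\cong\T Q\times_Q\T Q$ into the formula $\<\dd(\dT F),\A-\A^{'}>=\<\dd F(q),Z>$ used above. Once this identification is set up, both assertions of the proposition follow essentially tautologically, and the argument generalises verbatim to the Banach-manifold setting needed later for the action functional $S_L$.
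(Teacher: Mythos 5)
Your proof is correct and follows essentially the same route as the paper: well-definedness via the $\T\tau_Q$-vertical difference $\A-\A^{'}\sim(Y,Z)$ and the identity $\dT\dT F(\A)-\dT\dT F(\A^{'})=\<\dd F(q),Z>$, and symmetry via the $\kappa_Q$-invariance of $\dT\dT F$ coming from commutativity of mixed partials. No gaps.
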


\paragraph{The second variation at a relative critical point.}

From the point of view of Variational Calculus it is more natural to consider critical points of a function relative to a vector bundle morphism, rather than just critical points. Our geometric setting is now the following: $F:Q\ra\R$ is a smooth function, $\sigma:S\ra N$ is a vector bundle and, additionally, we have a vector bundle morphism $P:\T Q\ra S$ over $p:Q\ra N$. A point $q\in Q$ is called a \emph{critical point of $F$ relative to $P$} (which will be denoted by $q\in\Crit_P(F)$) if 
$$\<\dd F(q),X>=0\quad\text{for every $X\in\T_q Q\cap \ker P$}.$$

To give a concrete example think about the standard variational problem associated with a Lagrangian $L:\T M\ra\R$. We may take $Q=\Adm([t_0,t_1],\T M)$ to be the space of admissible curves in $\T M$ (i.e. tangent lifts of smooth curves $\zeta:[t_0,t_1]\ra M$); $N=M\times M$, $\sigma=\tau_N:\T N\ra N$, and $P=\T p:\T Q\ra \T N$, where $p$ sends a curve $\dot\zeta(\cdot)$ to its end-points $\left(\zeta(t_0),\zeta(t_1)\right)\in M\times M$. Define  $F(\dot\zeta):=\int_{t_0}^{t_1}L(\dot\zeta(t))\dd t$. Now the solutions of the EL-equation (extremals of the action) are precisely critical points of $F$ relative to $\T p$ ($\dd F$ vanishes on infinitesimal admissible variations with vanishing end-points - cf. Remark \ref{rem:lagr_var_princ} and Theorem \ref{thm:lagr_var_princ}).

Now we would like to define the second variation of $F$ at $q\in\Crit_P(F)$.  Obviously, the idea from Proposition \ref{prop:second_var} will not work directly. Indeed, taking vectors 
$\A, \A^{'}\in\T\T Q$ as in the previous paragraph we have
$$\dT\dT F(\A)-\dT\dT F(\A^{'})=\<\dd F(q),Z>,$$
which is non-zero unless $Z\in\ker P$. To guarantee this we cannot take arbitrary vectors $\A$ and $\A^{'}$ with the same $\T Q$-projections $X$ and $Y$, but have to restrict ourselves to a certain subclass of such vectors. 

It is easy to check that if $\A-\A^{'}\in\T_Y\T Q$ is a vertical vector corresponding to $(Y,Z)$, then $\T P(\A)$ and $\T P(\A^{'})$ project to $P(Y)$ and $\T p(X)$ under $\tau_{S}$ and $\T\sigma$, respectively. The difference $\T P(\A)-\T P(\A^{'})\in \T_{P(Y)}\T N$ is a vertical vector w.r.t. $\T\sigma$ corresponding to $(P(Y),P(Z))$. It follows that $Z\in\ker P$ if and only if $\T P(\A)-\T P(\A^{'})$ corresponds to $(P(Y),0)$. This in turn implies $\T P(\A)=\T P(\A^{'})$. In other words, formula \eqref{eqn:second_var} will give a good definition of the second variation of $F$ at $q\in\Crit_P(F)$ if we consider $\A$'s with fixed $\T P$-projections. Of course $\T P(\A)\in\T S$ should be consistent with $\T Q$-projections of $\A$, i.e. it should project to $P(Y)$ under $\tau_S$ and to $\T p(X)$ under $\T\sigma$. Let us summarize:

\begin{prop}\label{prop:second_var_rel} Assume that  $q\in \Crit_P(F)$. Consider an arbitrary map assigning to a pair of vectors $\ol X\in\T_{p(q)} N$ and $\wt Y\in S_{p(q)}$ a vector $h(\ol X,\wt Y)\in \T S$ such that $\tau_{S}\circ h(\ol X,\wt Y)=\wt Y$ and $\T\sigma\circ h(\ol X,\wt Y)=\ol X$ (i.e. $ h(\ol X,\wt Y)$ is a lift of $\ol X$ to $\T_{\wt Y}S$). Then we may define the second variation of $F$ at $q$ w.r.t. $h$ via the formula:
$$\del^2 F(q,h)(X,Y):=\dT\dT F(\A),$$
where $\A\in\T\T Q$ is an arbitrary vector projecting to $X$ under $\T\tau_Q$, to $Y$ under $\tau_{\T Q}$ and to $h(\T p(X),P(Y))$ under $\T P$.
\end{prop}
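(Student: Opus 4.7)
The plan is to imitate the well-definedness argument for Proposition \ref{prop:second_var}, but now keeping track of the additional projection $\T P$. Two things need to be verified: (i) existence of some $\A\in\T_Y\T Q$ satisfying all three projection conditions, and (ii) that $\dT\dT F(\A)$ does not depend on the choice of such $\A$.

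For existence, one observes that the data $(X,Y,h(\T p(X),P(Y)))$ is automatically compatible: by hypothesis $h(\T p(X),P(Y))$ projects to $P(Y)$ under $\tau_S$ and to $\T p(X)$ under $\T\sigma$, which matches the $\tau_{\T Q}$- and $\T\tau_Q$-projections of a prospective $\A$ postcomposed with $P$ (respectively $p$). Fibrewise the three maps $\T\tau_Q,\tau_{\T Q},\T P\colon \T_Y\T Q\to \T_q Q\oplus\T_qQ\oplus \T_{P(Y)}S$ form a linear map whose image is the compatible subspace; a local trivialization of $P$ (splitting $\T Q$ as $\ker P\oplus H$ for some complement $H$) lets one explicitly construct a lift, so surjectivity onto the compatible data is clear.

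For well-definedness, let $\A,\A^{'}\in\T_Y\T Q$ both have the prescribed projections. Since $\tau_{\T Q}(\A)=\tau_{\T Q}(\A^{'})=Y$, the difference $\A-\A^{'}$ lives in the same fibre and, as both project to $X$ under $\T\tau_Q$, it is vertical w.r.t.\ $\T\tau_Q$, i.e.\ $\A-\A^{'}\in\V_Y\T Q\cong\T_qQ$. Write $\A-\A^{'}\leftrightarrow(Y,Z)$ for a unique $Z\in\T_qQ$. Exactly as in the proof of Proposition \ref{prop:second_var} we get
\[
\dT\dT F(\A)-\dT\dT F(\A^{'})=\<\dd\dT F,\A-\A^{'}>=\dT F(Z)=\<\dd F(q),Z>.
\]
The new input from the third constraint is that $\T P$ is a vector bundle morphism which, restricted to the vertical subbundle, coincides with $P$ itself under the identifications $\V\T Q\cong\T Q\times_Q\T Q$ and $\V S\cong S\times_N S$. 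Hence $\T P(\A)=\T P(\A^{'})$ forces the vertical vector $\A-\A^{'}$ to satisfy $P(Z)=0$, i.e.\ $Z\in\T_qQ\cap\ker P$.

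Since $q\in\Crit_P(F)$, the pairing $\<\dd F(q),Z>$ vanishes on $\ker P$, and we conclude $\dT\dT F(\A)=\dT\dT F(\A^{'})$, so $\del^2F(q,h)(X,Y)$ is independent of the chosen lift. The main (in fact only) subtle point is bookkeeping: one has to remember that $\T\T Q$ carries two vector bundle structures and verify that the subtraction of $\A$ and $\A^{'}$ happens in the right one (the fibre of $\tau_{\T Q}$), and that the identification $\V\T Q\cong\T Q\times_Q\T Q$ intertwines $\T P|_{\V\T Q}$ with $P$. In the Banach setting used for $Q=\Adm([t_0,t_1],E)$ the same argument goes through verbatim, given Lemma \ref{lem:adm_var} identifying $\T_\gamma Q$ with infinitesimal admissible variations, and a splitting of $P=\T p$ using the end-point evaluation.
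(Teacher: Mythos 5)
Your argument is correct and follows essentially the same route as the paper: the difference $\A-\A'$ is vertical, corresponds to a pair $(Y,Z)$, the fixed $\T P$-projection forces $Z\in\ker P$ (via the identification of $\T P$ on vertical vectors with $P$), and relative criticality then kills $\<\dd F(q),Z>$. The existence discussion you add is a harmless supplement the paper leaves implicit.
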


In general there is no canonical choice of $h$ unless $\sigma$ is provided with some additional structure. For example given a connection on $\sigma$, we may choose $h(\ol X,\wt Y)$ to be the horizontal lift of the base vector $\ol X$ to $\T_{\wt Y}S$. Observe that if the connection is linear, then  $\del^2 F(q,h)(X,Y)$ becomes a bilinear form. Indeed, to prove linearity w.r.t. $X$ observe that if $\A\in\T\T Q$ projects to $X$, $Y$ and $h(\T p(X),P(Y))$; and $\A^{'}$ to $X^{'}$, $Y$ and $h(\T p(X^{'}),P(Y))$, then the linear combination $\lambda\A+\mu\A^{'}$ (taken w.r.t. $\tau_{\T Q}$) projects to $\lambda X+\mu X^{'}$, $Y$ and $\lambda h(\T p( X),P(Y))+\mu h(\T p(X^{'}),P(Y))= h(\T p(\lambda X+\mu X^{'}),P(Y))$. Similarly, using the VB structure $\T\tau_Q$ on $\T\T Q$ and linearity of $h(\ol X,\wt Y)$ w.r.t. $\wt Y$ (the connection is linear) one can prove linearity of $\del^2 F(q,h)(X,Y)$ w.r.t. $Y$. 

The symmetry of $\del^2 F(q,h)$ may be quite problematic. In general one requires that if $\A\in\T\T Q$ is a vector projecting to $X$, $Y$ and $h(\T p(X),P(Y))$ then $\kappa_Q(\A)$ projects to  $Y$, $X$ and $h(\T p(Y),P(X))$ (i.e. $\kappa_Q$ preserved the distinguished class of vectors in $\T\T Q$). If this is the case then the symmetry of the second variation follows from the $\kappa_Q$-invariance of $\dT\dT F$:
$$ \del^2F(q,h)(X,Y)=\dT\dT F(\A)=\dT\dT F(\kappa_Q(\A))=\del^2F(q,h)(Y,X).$$

Natural examples of such a situation appear when considering symmetric connections. Take a tangent bundle  $\sigma=\tau_N:\T N\ra N$; $p:Q\ra N$ to be any smooth map and $P=\T p:\T Q\ra \T N$. If $h$ is a horizontal lift of a symmetric linear connection on $\tau_N$ then $\kappa_N\circ h(\ol X,\wt Y)=h(\wt Y,\ol X)$ for any $\ol X,\wt Y\in \T_x N$. Now if $\A\in\T\T Q$ projects to $X$, $Y$ and $h(\T p(X),\T p(Y))$ then $\kappa_Q(\A)$ projects to $Y$, $X$ and 
$$\T P(\kappa_Q(\A))=\T\T p(\kappa_Q(\A))=\kappa_N(\T\T p(\A))=\kappa_N(h(\T p(X),\T p(Y)))=h(\T p(Y),\T p(X)),$$
and hence $\del^2 F(q,h)$ is a symmetric bilinear form. More generally, one can check that if $\sigma:S\ra N$ carries an algebroid structure; $P:\T Q\ra S$ is an algebroid morphism and $h$ is a horizontal lift of a symmetric linear connection on $\sigma$ (symmetric w.r.t. to the algebroid structure), then $\del^2 F(q,h)$ is a symmetric bilinear form. 

Without the presence of a connection on $\sigma$ we may define the second variation $\del^2 F(q)(X,Y)$ after restricting our attention to $Y$'s belonging to $\ker P$. Since $P(Y)=0$ we do not have to choose $h(\ol X,\wt Y)$ for any $\ol X$ and $\wt Y$, but it suffices to define $h(\ol X,0)$ only. There is the canonical choice $h(\ol X,0)=\theta_{\ol X}$ -- the null vector w.r.t. $\T\sigma$ at $\ol X\in\T N$. Since $\tau_S$-addition gives $\theta_{\ol X}+\theta_{\ol X^{'}}=\theta_{\ol X+\ol X^{'}}$ and $\T \sigma$-addition gives  $\theta_{\ol X}+\theta_{\ol X}=\theta_{\ol X}$, the second variation will be a bilinear form for this choice of $h$. 

Of course in this setting there is no sense speaking about the symmetry of the second variation unless we restrict ourselves to $X,Y\in\T_qQ\cap \ker P$. It seems that in general the second variation will not be symmetric, without some additional assumptions on $P$ and $\sigma$.  To sum up:

\begin{lem}\label{lem:second_var_rel}
Let $q\in Q$ be a critical point of $F$ relative  to $P:\T Q\ra S$. Then for $X\in \T_q Q$ and $Y\in\T_q Q\cap\ker P$ we may define the \emph{second variation of $F$ at $q$ in the directions $X,Y$} by
$$\del^2 F(q)(X,Y):=\dT\dT F(\A),$$
where $\A\in\T\T Q$ is an arbitrary vector projecting to $X$ under $\T\tau_Q$, to $Y$ under $\tau_{\T Q}$ and to $\theta_{\T p(X)}$ under $\T P$.

Moreover, $\del^2 F(q)(\cdot,\cdot)$ is a bilinear form. 
\end{lem}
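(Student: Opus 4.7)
The plan is to verify the two assertions — well-definedness and bilinearity — in sequence, leveraging the double vector bundle (DVB) structure of $\T\T Q$ in direct analogy with the reasoning preceding Proposition~\ref{prop:second_var_rel}. The key computational identity, valid whenever $\A,\A'\in\T\T Q$ share both $\T Q$-projections, is
$$\dT\dT F(\A)-\dT\dT F(\A')=\<\dd F(q),Z>,$$
where $Z\in\T_q Q$ is the second component of the pair $(Y,Z)\in\V\T Q\approx\T Q\times_Q\T Q$ representing the $\T\tau_Q$-vertical difference $\A-\A'$.

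For well-definedness, I would take any two vectors $\A,\A'\in\T\T Q$ matching on all three projections $(X,Y,\theta_{\T p(X)})$. The DVB analysis carried out before Proposition~\ref{prop:second_var_rel} identifies the $\T P$-image of the difference with the pair $(P(Y),P(Z))\in\V S\approx S\times_N S$; the equality $\T P(\A)=\T P(\A')$ then forces $P(Z)=0$, i.e.\ $Z\in\ker P$. Since $q\in\Crit_P(F)$, we have $\<\dd F(q),Z>=0$, and hence $\dT\dT F(\A)=\dT\dT F(\A')$.

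For bilinearity, the strategy is to realise linear combinations of the arguments by compatible linear combinations of lifts inside $\T\T Q$. For linearity in $X$: given $\A,\A'$ projecting to $(X,Y,\theta_{\T p(X)})$ and $(X',Y,\theta_{\T p(X')})$, both lie in the common $\tau_{\T Q}$-fibre over $Y$, so one may form $\lambda\A+\mu\A'$ there. DVB compatibility yields $\T\tau_Q$-projection $\lambda X+\mu X'$, and since $\T P$ is a $\tau_{\T Q}$-VB morphism covering $P$, the $\T P$-projection equals $\lambda\theta_{\T p(X)}+\mu\theta_{\T p(X')}$ (in $\tau_S$), which collapses to $\theta_{\T p(\lambda X+\mu X')}$ by linearity of the zero-section map $\theta:\T N\to\T S$ and of $\T p$. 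Well-definedness together with the $\tau_{\T Q}$-linearity of $\dT\dT F$ (immediate, since it is the $\dT$-lift of a function by~\eqref{eqn:lift_function} applied twice) then yields the claim. Linearity in $Y$ proceeds analogously but in the $\T\tau_Q$-structure; the required identity now becomes $\lambda\theta_{\T p(X)}+\mu\theta_{\T p(X)}=\theta_{\T p(X)}$ in the $\T\sigma$-fibre (null-vector absorption), and the $\T\tau_Q$-linearity of $\dT\dT F$ follows from its $\tau_{\T Q}$-linearity combined with the $\kappa_Q$-invariance $\dT\dT F\circ\kappa_Q=\dT\dT F$ (commutation of partial derivatives), since $\kappa_Q$ interchanges the two VB structures on $\T\T Q$.

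The main thing to watch is the bookkeeping: tracking which VB structure each linear combination is being formed in and which VB-morphism property of $\T P$ is invoked at each step. In particular, one must distinguish the two distinct $\theta$-identities (zero-section linearity for the $X$-variable versus null-vector absorption for the $Y$-variable) and verify that the hypothesis $Y\in\ker P$ is precisely what ensures that the $\tau_S$-projection $P(Y)=0$ is compatible with the base of the prescribed $\theta_{\T p(X)}$.
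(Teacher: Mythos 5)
Your proof is correct and follows essentially the same route as the paper: the lemma is there presented as a summary of the preceding DVB analysis (the $\T\tau_Q$-vertical difference identified with $(Y,Z)$, the image $(P(Y),P(Z))$ forcing $Z\in\ker P$, and the two $\theta$-identities — zero-section linearity in $\tau_S$ for the $X$-slot and null-vector absorption in $\T\sigma$ for the $Y$-slot), all of which you reproduce faithfully. Your only addition is the explicit justification that $\dT\dT F$ is fiberwise linear for both vector bundle structures on $\T\T Q$ (via $\kappa_Q$-invariance), a point the paper leaves implicit.
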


\paragraph{The second variation in variational problems.}
We would like to apply the construction from Lemma \ref{lem:second_var_rel} to variational problems on AL algebroids. In this case we can consider  $Q=\Adm([t_0,t_1],E)$ -- the Banach manifold of admissible paths $\gamma:[t_0,t_1]\ra E$ and $F=S_L$ to be the action associated to  a Lagrangian system $L:E\ra M$ 
$$S_L(\gamma):=\int_{t_0}^{t_1}L(\gamma(t))\dd t.$$

Since $E$ is AL, the tangent space of $Q$ at $\gamma$ consists of admissible variations $\T_\gamma Q=\{\del_\xi\gamma:\xi\in E_{\tau\circ\gamma}\}$. Assume that $\xi\mapsto \del_\xi\gamma$ is an isomorphism of vector spaces  $E_{\tau\circ\gamma}$ and $\T_\gamma Q$. (In general $\xi\mapsto\del_\xi\gamma$ is just a linear epimorphism. In such a case a little more attention is needed to construct the second variation of $S_L$ -- see Remark \ref{rem:non_iso}). An admissible curve $\gamma$ is a solution of the EL equation \eqref{eqn:EL} if and only if $\gamma$ is a critical point of $S_L$ relative to the map $P:\T Q\ra E\times E$, which sends $\del_\xi\gamma$ to $(\xi(t_0),\xi(t_1))$. Indeed, $\T_\gamma Q\cap \ker P$ is precisely the set of admissible variations with vanishing end-points. 

Our goal is to define $\del^2 S_L(\gamma)(\del_\eta\gamma,\del_\xi\gamma)$ -- the second variation at $\gamma$ (which is a solution of the EL equation) in the directions of $\del_\eta\gamma$ and $\del_\xi\gamma$. For simplicity we will use the abbreviated notation $\del^2 S_L(\gamma)(\eta,\xi)$ and will treat the second variation as a function of generators $\eta$ and $\xi$. 

To proceed within the scheme sketched in the previous paragraph (in Lemma \ref{lem:second_var_rel}) we need more information about the iterated tangent space $\T\T Q$. To describe $\T_{\del_\xi\gamma}\T Q$ consider an s-parameterized family of admissible variations $\del_{\xi(t,s)}\gamma(t,s)$ around $\del_{\xi(t)}\gamma(t)=\del_{\xi(t,0)}\gamma(t,0)$. Denote by $\Delta$ the differentiation $\frac{\pa}{\pa s}$ at $s=0$. Now
\begin{align*}\Delta(\del_{\xi(t,s)}\gamma(t,s))&=\Delta\left(\kappa_{\gamma(t,s)}(\dot\xi(t,s))\right)=\T\kappa_{\Delta\gamma}\left(\Delta(\dot\xi)\right)=\T\kappa_{\Delta\gamma}(\kappa_E(\Delta\xi)^{\cdot})=\\
&=\kappa_E\circ\left(\kappa_E\circ\T\kappa_{\Delta\gamma}\circ\kappa_E\right)\left((\Delta\xi)^{\cdot}\right)=\kappa_E\circ\dT\kappa_{\Delta\gamma}\left((\Delta\xi)^{\cdot}\right), \end{align*}
where $\Delta\gamma(t)=\frac{\pa}{\pa s}\big|_{0}\gamma(t,s)\in\T_\gamma E$ and  $\Delta\xi(t)=\frac{\pa}{\pa s}\big|_{0}\xi(t,s)\in\T_\xi E$. Observe that  $\Delta\gamma$ and $\Delta\xi$ project to the same path $\Delta x(t):=\T\tau\circ\Delta\xi(t)=\T\tau\circ\Delta\gamma(t)=\frac{\pa}{\pa s}\big|_{0}x(t,s)$ and that $\Delta\gamma$ is $\dT E$-admissible, i.e. 
$$\left(\T\tau\circ\Delta\gamma\right)^{\cdot}=(\Delta x)^{\cdot}=\kappa_M\circ\Delta(\dot x)=\kappa_M\circ\Delta(\rho(\gamma))=\kappa_M\circ\T\rho(\Delta\gamma)=\dT\kappa(\Delta\gamma).$$
If follows that $\dT\kappa_{\Delta\gamma}\left(\Delta\xi\right)^{\cdot}$ is a $\dT E$-admissible variation along a $\dT E$-admissible path $\Delta\gamma$ with a generator $\Delta\xi$. With some abuse of notation we will denote it by $\del_{\Delta\xi}\Delta\gamma$. Since we deal with an AL algebroid and $\gamma(t,s)$ was a family of admissible paths, from Lemma \ref{lem:adm_var} we have $\Delta \gamma=\del_\eta\gamma$ and $\Delta x=\rho(\eta)$ for some $\eta\in E_{\tau\circ\gamma}$. 

We see that elements of $\T_{\del_\xi\gamma}\T Q$ are generated by elements 
$\Delta\xi\in \T_\xi E$ and $\eta\in E_{\tau\circ\gamma}$ satisfying the compatibility condition $\T\tau\circ\Delta\xi=\rho(\eta)$. Such an element will be denoted by $\del_{(\Delta\xi,\eta)}\gamma$. The precise formula is 
$$\del_{(\Delta\xi,\eta)}\gamma:=\kappa_E\circ\del_{\Delta\xi}(\del_\eta\gamma).$$
By construction $\del_{(\Delta\xi,\eta)}\gamma$ projects to $\del_\xi\gamma$ under $\tau_{\T E}$ and to $\del_\eta\gamma$ under $\T\tau_E$. Let us note that since $P(\del_\xi\gamma)=(\xi(t_0),\xi(t_1))$, then $\T P\left( \del_{(\Delta\xi,\eta)}\gamma\right)=\left(\Delta\xi(t_0),\Delta\xi(t_1)\right)$. Due to Lemma \ref{lem:second_var_rel} we may define the second variation of $S_L$ at $\gamma$ after restricting our attention to $\tau_{\T E}\circ\del_{(\Delta\xi,\eta)}\gamma=\del_\xi\gamma\in\ker P$, i.e. $\xi\in E^0_{\tau\circ\gamma}$ and  $\T P\left( \del_{(\Delta\xi,\eta)}\gamma\right)\in\ker\T P$ i.e. $\Delta\xi(t_0)$ and $\Delta\xi(t_1)$ being null vectors w.r.t. $\T\tau:\T E\ra \T M$. To sum up we can state the following lemma:

\begin{lem}\label{lem:second_adm_var}
Let $\gamma$ be a solution of the EL equation \eqref{eqn:EL}. 
Formula
\begin{equation}\label{eqn:delta2_L}
\del^2 S_L(\gamma)(\eta,\xi):=\dT^2S_L(\del_{(\Delta\xi,\eta)}\gamma)=\int_{t_0}^{t_1}\<\dd\dT L(\del_\xi\gamma), \del_{(\Delta\xi,\eta)}\gamma(t)>\dd t;
\end{equation}
where $\Delta\xi\in\T_\xi E$ is any curve such that $\T\tau\circ\Delta\xi=\T\tau\circ\Delta\gamma=\rho(\eta)$ and $\Delta\xi(t_0)$, $\Delta\xi(t_1)$ are null vectors w.r.t.  $\T\tau:\T E\ra \T M$; defines a bilinear form $\del^2 S_L(\gamma)(\cdot,\cdot):E_{\tau\circ\gamma}\times E^0_{\tau\circ\gamma}\ra \R$. We will call it the \emph{second variation of $S_L$ at $\gamma$}. 

Moreover the following equality holds
\begin{equation}\label{eqn:delta2_L_eqn}
\del^2 S_L(\gamma)(\eta,\xi):=\int_{t_0}^{t_1}\<\Delta\xi(t),\VV_{\kappa_{E^\ast}\circ\T\Lambda_{L}(\del_\eta\gamma(t))-\frac{\dd}{\dd t} \T\lambda_{L}(\del_\eta\gamma(t))}>_{\T\tau}\dd t.
\end{equation}
\end{lem}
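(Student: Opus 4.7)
The plan is to reduce both assertions to results already established. First, the well-definedness and bilinearity of $\del^2 S_L(\gamma)(\cdot,\cdot)$ will follow as an immediate application of Lemma \ref{lem:second_var_rel}. I would verify that $\gamma$ being a solution of \eqref{eqn:EL} is, by Remark \ref{rem:lagr_var_princ}, exactly the condition $\gamma \in \Crit_P(S_L)$ for the VB morphism $P:\T Q \ra E \times E$, $\del_\xi\gamma \mapsto (\xi(t_0),\xi(t_1))$; that $\ker P$ intersected with $\T_\gamma Q$ is the set of admissible variations with vanishing end-points, matching the restriction $\xi \in E^0_{\tau\circ\gamma}$; and that $\del_{(\Delta\xi,\eta)}\gamma$ is precisely a vector in $\T\T Q$ projecting to $\del_\eta\gamma$ and $\del_\xi\gamma$ under $\T\tau_Q$ and $\tau_{\T Q}$ respectively, with $\T P$-projection $(\Delta\xi(t_0),\Delta\xi(t_1))$ equal to the canonical null vector $\theta_{\T p(\del_\eta\gamma)}$ precisely because $\Delta\xi$ is required to be null in $\T\tau:\T E \ra \T M$ at the endpoints. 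Lemma \ref{lem:second_var_rel} then yields both well-definedness of \eqref{eqn:delta2_L} and its bilinearity in $(\eta,\xi)$.

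The next step is to rewrite $\dT^2 S_L(\del_{(\Delta\xi,\eta)}\gamma)$ as a first-order variational expression on the lifted algebroid. Using $\del_{(\Delta\xi,\eta)}\gamma = \kappa_E \circ \del_{\Delta\xi}(\del_\eta\gamma)$ together with the $\kappa$-invariance of the iterated tangent lift (the commutativity of mixed partials from the discussion preceding Proposition \ref{prop:second_var}), I would replace $\del_{(\Delta\xi,\eta)}\gamma$ by $\del_{\Delta\xi}(\del_\eta\gamma)$ inside $\dT^2 S_L$. Interchanging $\dT$ with the integral defining $S_L$, I would then identify $\dT S_L$ restricted to $\T_\gamma Q$ with the action $S_{\dT L}$ of the lifted Lagrangian system on $\dT E$; here the AL hypothesis is essential, since by Lemma \ref{lem:adm_var} and Remark \ref{rem:Jacobi_adm} the space $\T_\gamma Q$ coincides with the set of $\dT E$-admissible curves over $\gamma$. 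Consequently,
\[\del^2 S_L(\gamma)(\eta,\xi)=\bigl\langle \dd S_{\dT L}(\del_\eta\gamma),\, \del_{\Delta\xi}(\del_\eta\gamma)\bigr\rangle,\]
which is the first variation of $S_{\dT L}$ at the $\dT E$-admissible curve $\del_\eta\gamma$ along the $\dT E$-admissible variation generated by $\Delta\xi$.

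Finally, I would apply Theorem \ref{thm:lagr_var_princ} to the lifted system $(\dT L, \dT E)$ at $\del_\eta\gamma$ with generator $\Delta\xi$. This produces an integral involving the vertical part of $\Lambda_{\dT L}(\del_\eta\gamma) - \tfrac{\dd}{\dd t}\lambda_{\dT L}(\del_\eta\gamma)$ together with the boundary term $\langle \Delta\xi, \lambda_{\dT L}(\del_\eta\gamma)\rangle_{\T\tau}\big|_{t_0}^{t_1}$; the latter vanishes because $\Delta\xi(t_0)$ and $\Delta\xi(t_1)$ are null with respect to $\T\tau:\T E\ra\T M$ and the pairing in question is the $\T\tau$-pairing. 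Substituting the identifications $\lambda_{\dT L}=\T\lambda_L$ and $\Lambda_{\dT L}=\kappa_{E^\ast}\circ\T\Lambda_L$, which were established in the proof of Theorem \ref{thm:Jacobi} by means of \eqref{eqn:lift_eps}, yields \eqref{eqn:delta2_L_eqn}. I expect the most delicate step to be the identification $\dT S_L|_{\T_\gamma Q} = S_{\dT L}|_{\T_\gamma Q}$, since it requires not only differentiation under the integral but also the matching of the two admissibility conditions, which would fail outside the AL category.
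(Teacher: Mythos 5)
Your proposal is correct and follows essentially the same route as the paper's proof: well-definedness via Lemma \ref{lem:second_var_rel}, the $\kappa_E$-invariance of $\dT^2 S_L$ to pass to $\del_{\Delta\xi}(\del_\eta\gamma)$, the identification $\dT S_L = S_{\dT L}$, an application of Theorem \ref{thm:lagr_var_princ} to the lifted system with the boundary term killed by the vanishing of $\Delta\xi$ at the end-points, and finally the identities $\lambda_{\dT L}=\T\lambda_L$ and $\Lambda_{\dT L}=\kappa_{E^\ast}\circ\T\Lambda_L$ from the proof of Theorem \ref{thm:Jacobi}.
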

  
\begin{proof}
The fact that the second variation is well-defined by formula \eqref{eqn:delta2_L} follows from Lemma \ref{lem:second_var_rel}. We need to check that equation \eqref{eqn:delta2_L_eqn} holds. First of all using the $\kappa_E$-invariance of $\dT^2S_L$ we get
$$
\del^2 S_L(\gamma)(\eta,\xi)=\dT\dT S_L\left(\del_{(\Delta\xi,\eta)}\gamma\right)=\dT\dT S_L\left(\kappa_E\circ\del_{\Delta\xi}(\del_\eta\gamma)\right)=
\dT\dT S_L\left(\delta_{\Delta \xi}(\del_\eta\gamma)\right).
$$
Since $\dT S_L=S_{\dT L}$, the later equals $\dT S_{\dT L}\left(\delta_{\Delta \xi}(\del_\eta\gamma)\right)$. We can use \eqref{eqn:EL_var} for a Lagrangian system $\dT E:\T E\ra \R$ on $\dT E$ to get
\begin{align*}
\del^2 S_L(\gamma)(\eta,\xi)=&\dT S_{\dT L}(\delta_{\Delta \xi}\Delta\gamma)=\<\dd S_{\dT L}(\del_\eta\gamma),\delta_{\Delta \xi}(\del_\eta\gamma)>=\\
&\int_{t_0}^{t_1}\<\Delta\xi(t),\VV_{\Lambda_{\dT L}(\del_\eta\gamma(t))-\frac{\dd}{\dd t} \lambda_{\dT L}(\del_\eta\gamma(t))}>_{\T\tau}\dd t+\<\Delta\xi(t),\lambda_{\dT L}(\del_\eta\gamma(t))>_{\T\tau}\bigg|^{t_1}_{t_0}.
\end{align*}
The last summand vanishes as $\Delta \xi(t)$ vanishes at the end-points.
Finally note that from the proof of Theorem \ref{thm:Jacobi} we know that $\Lambda_{\dT L}=\kappa_{E^\ast}\circ \T\Lambda_L$ and that $\lambda_{\dT L}=\T\lambda_L$. This finishes the proof of \eqref{eqn:delta2_L_eqn}. 
\end{proof}

Formula \eqref{eqn:delta2_L_eqn} generalizes the classical formula \eqref{eqn:C} -- it relates the second variation of $S_L$ to Jacobi equation \eqref{eqn:Jacobi_2}. The details are provided in Section \ref{ssec:riem_geom}.

\begin{remark}\label{rem:non_iso}
Note that the variational interpretation of the EL equation on algebroids (Remark \ref{rem:lagr_var_princ}) is slightly more general than what we considered in the paragraph about relative critical points. Solutions of the EL equation are critical points of the map
\begin{align*}
\Map([t_0,t_1],E)&\lra\T Q\overset{\dd F}\lra\R\\
\xi&\longmapsto \del_\xi\gamma\longmapsto\dd F(\del_\xi\gamma)
\end{align*}
relative to the map $P:\Map([t_0,t_1],E)\ni \xi\longmapsto(\xi(t_0),\xi(t_1))\in E\times E$ rather than critical points of $\dd F:\T Q\ra\R$ relative to $P:\T Q\ra E\times E$. One can say that we do not work directly with $\T Q$ but we do it via the map $\xi\mapsto \del_\xi\gamma$. 

If  $\T_\gamma Q$ and $E_{\tau\circ\gamma}$ are isomorphic (as we assumed in this paragraph), then the above setting coincides with the setting considered in the previous paragraph. However, the results and constructions of this paragraph hold even if the two spaces are not isomorphic. The reason is that what one really needs is the image of $\ker P$ in $\T Q$ under $\xi\mapsto\del_\xi\gamma$ and the image of $\ker \T P$ in $\T\T Q$ under $(\Delta\xi,\eta)\mapsto\del_{(\Delta\xi,\eta)}\gamma$. 

Finally let us comment on the isomorphism between $E_{\tau\circ\gamma}$ and $\T_\gamma Q$. It was proved in \cite{GJ_PMP} that, if $E$ is an AL algebroid, then given an element $\del\gamma(t)\in\T_\gamma Q$ and $\xi_0\in E_{\tau\circ\gamma(t_0)}$ such that $\rho(\xi_0)=\T\tau(\del\gamma(t_0))$ there exists a unique $\xi(t)\in E_{\tau\circ\gamma}$ such that $\del_\xi\gamma=\del\gamma$ and $\xi(t_0)=\xi_0$ (to find such a $\xi$ one needs to solve a linear ODE). If $E$ is not transitive (i.e. $\ker\rho$ is nontrivial) there may be several elements $\xi_0$ and several corresponding curves $\xi(t)$ for which $\del\gamma=\del_\xi\gamma$. In such a case $\xi\mapsto\del_\xi\gamma$ is just an epimorphism. 
\end{remark}

\paragraph{Symmetry of the second variation.} 

The question if $\del^2 S_L(\gamma)$ is a symmetric bilinear form needs some attention. First of all to speak about symmetry we have to restrict our attention to pairs of generators vanishing at the end-points, i.e. consider $\del^2 S_L(\gamma)(\cdot,\cdot):E^0_{\tau\circ\gamma}\times E^0_{\tau\circ\gamma}\ra \R$. 

Recall that the second variation $\del^2 S_L(\gamma)(\eta,\xi)$ is defined by means of a vector $\del_{(\Delta\xi,\eta)}\gamma$ with generators $\Delta\xi$ and $\eta$ vanishing at the end-points. To define $\del^2 S_L(\gamma)(\xi,\eta)$ we need another vector $\del_{(\Delta\eta,\xi)}\gamma$ with $\Delta\eta$ and $\xi$ vanishing at the end-points. For a given $(\Delta\xi,\eta)$ there is a natural choice of $\Delta\eta$ - we take a unique vector in $\T_\eta E$ $\kappa$-related to $\Delta\xi$. The comparison of vectors $\del_{(\Delta\xi,\eta)}\gamma$ and $\del_{(\Delta\eta,\xi)}\gamma$ can answer the question of symmetry. The conclusion is rather surprising: we can guarantee the symmetry of the second variation only for Lie algebroids. Simple examples form Section \ref{ssec:riem_geom} show that this condition cannot be omitted.

\begin{lem}\label{lem:symmetry}
Let $\gamma$ be the solution of the EL equation \eqref{eqn:EL} on an AL algebroid $E$. If $E$ is a Lie algebroid then the second variation $\del^2 S_L(\gamma)$ is symmetric after restricting to $E^0_{\tau\circ\gamma}\times E^0_{\tau\circ\gamma}$.
\end{lem}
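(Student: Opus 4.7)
The guiding idea is that the functor $\dT$ applied twice is tautologically symmetric under $\kappa_E$: if $\A\in\T\T E$ is represented by a homotopy $(s_1,s_2)\mapsto\gamma(t,s_1,s_2)$ then $\kappa_E(\A)$ is represented by the swapped homotopy, and applying $\dT\dT$ to a function produces $\partial_{s_1}\partial_{s_2}|_0$ of its pullback, which equals $\partial_{s_2}\partial_{s_1}|_0$ by Clairaut. Consequently
\[
\del^2 S_L(\gamma)(\eta,\xi)=\dT\dT S_L\bigl(\del_{(\Delta\xi,\eta)}\gamma\bigr)=\dT\dT S_L\bigl(\kappa_E\circ\del_{(\Delta\xi,\eta)}\gamma\bigr),
\]
so the lemma reduces to producing $\Delta\eta\in\T_\eta E$, vanishing at the end-points, such that
\[
\kappa_E\circ\del_{(\Delta\xi,\eta)}\gamma=\del_{(\Delta\eta,\xi)}\gamma.
\]

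\emph{Step 1 (choice of $\Delta\eta$).} Following the hint preceding the lemma, I would define $\Delta\eta$ as the unique element of $\T_\eta E$ that is $\kappa$-related to $\Delta\xi$; uniqueness here uses the AL property and the compatibility $\T\tau\circ\Delta\xi=\rho(\eta)$ already imposed in the definition of $\del_{(\Delta\xi,\eta)}\gamma$. A short bookkeeping check, using that $\kappa$ intertwines the two projections $\tau_E$ and $\T\tau$, shows that $\Delta\eta$ satisfies the required constraints $\T\tau\circ\Delta\eta=\rho(\xi)$ and $\Delta\eta(t_i)=0$, since $\Delta\xi(t_i)=0$ (as $\xi\in E^0_{\tau\circ\gamma}$) and $\xi,\eta$ vanish at the end-points.

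\emph{Step 2 (matching the two vectors in $\T\T E$).} After unravelling $\del_{(\Delta\xi,\eta)}\gamma=\kappa_E\circ\del_{\Delta\xi}(\del_\eta\gamma)$, the identity to be proved becomes
\[
\del_{\Delta\xi}(\del_\eta\gamma)=\kappa_E\circ\del_{\Delta\eta}(\del_\xi\gamma),
\]
an equality of vectors in $\T\T E$ with matching fourfold projections onto $E,\T E,\T E,\T\T M$. Both sides are built from the relation $\kappa$ applied twice, so the identity is an ``iterated $\kappa$-symmetry'' statement. Using the description of $\dT\kappa$ given in \eqref{eqn:lift_kappa}, the comparison reduces to checking that $\kappa$ interacts well with its own tangent prolongation, i.e.\ that the two admissible lifts of the pair $(\eta,\xi)$ to $\T\T E$ coincide.

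\emph{Step 3 (where ``Lie'' enters).} This is the main obstacle, and it is precisely here that AL is not enough: the required compatibility is equivalent to the Jacobi identity for the algebroid bracket. I would argue it as follows. By Lemma~\ref{lem:adm_var}, applied first on $\Adm([t_0,t_1],E)$, the variation $\del_\eta\gamma$ is generated by a genuine one-parameter family $s_1\mapsto\gamma(\cdot,s_1)$ of admissible curves. Applying Lemma~\ref{lem:adm_var} again inside this family (using that on a Lie algebroid the space of admissible curves has the nice surjectivity/integrability property, and that $\dT E$ is itself a Lie algebroid), the variation $\del_\xi\gamma$ can be simultaneously realised by a second parameter $s_2$, producing a genuine two-parameter admissible homotopy $\gamma(t,s_1,s_2)$ with $\partial_{s_1}|_0\gamma(t,s_1,0)=\del_\eta\gamma(t)$ and $\partial_{s_2}|_0\gamma(t,0,s_2)=\del_\xi\gamma(t)$. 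Both $\del_{\Delta\xi}(\del_\eta\gamma)$ and $\kappa_E\circ\del_{\Delta\eta}(\del_\xi\gamma)$ then represent $\partial_{s_1}\partial_{s_2}|_0\gamma$, so they agree. Correspondingly $\dT\dT S_L$ evaluated on either coincides with $\partial_{s_1}\partial_{s_2}|_0 S_L(\gamma(\cdot,s_1,s_2))=\partial_{s_2}\partial_{s_1}|_0 S_L(\gamma(\cdot,s_1,s_2))$, and symmetry $\del^2S_L(\gamma)(\eta,\xi)=\del^2S_L(\gamma)(\xi,\eta)$ follows.

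The crucial obstruction to this argument in the merely AL case is exactly the integrability in Step 3: without the Jacobi identity one cannot promote the two independent infinitesimal admissible variations to a bona fide two-parameter admissible homotopy, and the two iterated $\kappa$-lifts need no longer coincide. This matches the remark that the counterexamples of Section~\ref{ssec:riem_geom} show the Lie hypothesis cannot be dropped.
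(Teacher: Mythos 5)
Your Steps 1 and 2 coincide with the paper's own argument: the same choice of $\Delta\eta$ as the element of $\T_\eta E$ $\kappa$-related to $\Delta\xi$ (with the same end-point bookkeeping), the same appeal to the $\kappa_E$-invariance of $\dT\dT S_L$, and the same reduction of the lemma to the single identity $\del_{(\Delta\xi,\eta)}\gamma=\kappa_E\circ\del_{(\Delta\eta,\xi)}\gamma$, i.e.\ $\del_{\Delta\xi}(\del_\eta\gamma)=\kappa_E\circ\del_{\Delta\eta}(\del_\xi\gamma)$. Up to that point the proposal is fine.

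The gap is in Step 3, which is where the entire content of the lemma lives. You justify the identity by positing a two-parameter admissible homotopy $\gamma(t,s_1,s_2)$ whose mixed second derivatives, taken in the two orders, are exactly $\del_{\Delta\xi}(\del_\eta\gamma)$ and $\del_{\Delta\eta}(\del_\xi\gamma)$, and you attribute its existence to ``Lemma~\ref{lem:adm_var} applied twice''. But Lemma~\ref{lem:adm_var} is a purely first-order statement -- it identifies $\T_\gamma\Adm([t_0,t_1],E)$ and already holds for AL algebroids -- so it gives you one-parameter families realising $\del_\eta\gamma$ and, separately, $\del_\xi\gamma$, and nothing more. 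It does not give (i) a single two-parameter admissible family realising both simultaneously; (ii) the freedom to choose that family so that $\partial_{s_2}\partial_{s_1}\big|_0\gamma$ equals the \emph{prescribed} vector $\del_{\Delta\xi}(\del_\eta\gamma)$ (the mixed second derivative is extra data, not determined by the first-order jets); or (iii) the fact that the generator read off in the opposite order of differentiation is precisely the $\kappa$-relate $\Delta\eta$ of $\Delta\xi$. Each of these is a genuine second-order integrability claim, and proving any of them is exactly where the Jacobi identity must be used -- as written, the hypothesis ``Lie'' never enters your argument through a verifiable step, so the proof is circular at its key point. The paper instead compares the two vectors of $\T\T E$ directly: using the explicit formula \eqref{eqn:del2_coor} one computes that $\del_{(\Delta\xi,\eta)}\gamma-\kappa_E\circ\del_{(\Delta\eta,\xi)}\gamma$ is the $\T\tau_E$-vertical vector corresponding to $\bigl(\del_\xi\gamma,\,J(\gamma,\eta,\xi)\bigr)$, where $J$ is the Jacobiator (the computation also consumes the AL condition \eqref{eqn:ala_coor}); this vanishes precisely in the Lie case, and the resulting defect of symmetry $\int_{t_0}^{t_1}\<\dd L(\gamma),J(\gamma,\eta,\xi)>\dd t$ is exhibited explicitly for $E=\g$ in Section~\ref{ssec:EP_eqn}. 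To repair your proof you should either carry out this finite-dimensional coordinate comparison, or state and prove the second-order integrability result you are invoking -- which is not a formal consequence of Lemma~\ref{lem:adm_var} and is not available elsewhere in the paper.
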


\begin{proof}
Take any $\xi,\eta\in E^0_{\tau\circ\gamma}$ and $\Delta \xi\in\T_\xi E$ vanishing at the end-points. Denote by $\Delta\eta\in\T_{\eta}E$ a unique vector field along $\eta$ which is $\kappa$-related to $\Delta\xi$. Observe that since $\kappa$ is a linear relation, then $\Delta\eta$ also vanishes at the end-points. Now
$$\del^2S_L(\gamma)(\eta,\xi)=\int_{t_0}^{t_1}\dT^2L(\del_{(\Delta\xi,\eta)}\gamma)\dd t$$
and
$$\del^2S_L(\gamma)(\xi,\eta)=\int_{t_0}^{t_1}\dT^2L(\del_{(\Delta\eta,\xi)}\gamma)\dd t.$$
Since $\dT^2L$ is $\kappa_E$-invariant the later equals $\int_{t_0}^{t_1}\dT^2L(\kappa_E\circ\del_{(\Delta\eta,\xi)}\gamma)\dd t$. Our goal is to compare vectors $\del_{(\Delta\xi,\eta)}\gamma$ and $\kappa_E\circ\del_{(\Delta\eta,\xi)}\gamma=\del_{\Delta\eta}(\del_\xi\gamma)$. They both belong to $\T_{\del_\xi\gamma}\T E$ and both project to $\del_\eta\gamma$ under $\T\tau_E$. We claim that if $E$ is a Lie algebroid then
\begin{equation}\label{eqn:del_equal}
\del_{(\Delta\xi,\eta)}\gamma=\del_{(\Delta\eta,\xi)}\gamma.
\end{equation}
Above equality has a simple coordinate proof. The reader is invited to do the necessary calculation using formula \eqref{eqn:del2_coor} form the next section. A short calculation shows that the difference $\del_{(\Delta\xi,\eta)}\gamma-\del_{(\Delta\eta,\xi)}\gamma$ contains the RHSs of equations \eqref{eqn:ala_coor} and \eqref{eqn:lie_coor} which are local versions of \eqref{eqn:ala} and \eqref{eqn:lie}. To get the equality these conditions have to be satisfied. In fact this difference is a vector in $\T\T E$ vertical w.r.t. $\T\tau_E$ which corresponds to $(\del_\xi\gamma, J(\gamma,\eta,\xi))$ in the canonical identification $\V\T E\approx\T E\times_E\T E$. Here $J(\cdot,\cdot,\cdot)$ denotes the Jacobiator.

The geometric interpretation of the above calculations is the following. According to \cite{GU} $E$ carries the structure of a Lie algebroid if and only if the associated DVB morphism $\eps:\T^\ast E\ra\T E^\ast$ maps the canonical Poisson structure on $\T^\ast E$ to the bivector field $\dT\Pi$ associated with the algebroid structure $\dT E$, i.e. $\eps$ is a morphism of algebroid structures. This condition can be also expressed in the language of relations $\kappa$. Roughly speaking $\T\kappa$ should intertwine relation $\dT\kappa$ and $\kappa_E$. This is the sense of equation \eqref{eqn:del_equal}. Some more work is needed to make this intuition rigorous.   
\end{proof}

\paragraph{JVFs and the second variations.} Now we are ready to formulate the main result of this section. 

\begin{thm}\label{thm:variations} Let $L:E\ra\R$ be a Lagrangian system on an AL algebroid $\tau:E\ra M$ and
let $\gamma$ be a solution of the EL equation \eqref{eqn:EL}. A curve $\eta\in E_{\tau\circ\gamma}$ is a JVF along $\gamma$ if and only if 
$$\del^2S_L(\gamma)(\eta,\xi)=0\quad\text{for every $\xi\in E^0_{\tau\circ\gamma}$.}$$

If $E$ is a Lie algebroid then the null space of the symmetric bilinear form
$$\del^2S_L(\gamma)(\cdot,\cdot):E^0_{\tau\circ\gamma}\times E^0_{\tau\circ\gamma}\lra\R$$
consists of all JVFs along $\gamma$ vanishing at the end-points $t_0$ and $t_1$  (i.e. $\gamma(t_0)$ and $\gamma(t_1)$ are conjugated along $\gamma$ if the null space is non-trivial). The multiplicity of $\gamma(t_0)$ and $\gamma(t_1)$ as conjugate points equals the dimension of this null space.
\end{thm}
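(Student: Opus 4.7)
The plan is to make the first equivalence essentially tautological by combining Theorem \ref{thm:Jacobi}, which recasts the Jacobi equation as an EL equation for the lifted Lagrangian $\dT L$ on $\dT E$, with the variational characterization of EL solutions stated in Remark \ref{rem:lagr_var_princ}. The bridge between the ``second variation of $S_L$'' and the ``first variation of $S_{\dT L}$'' is already encoded in Lemma \ref{lem:second_adm_var}.

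First I would fix $\eta\in E_{\tau\circ\gamma}$ and chase the following chain of equivalences. By Definition \ref{def:Jacobi} and Theorem \ref{thm:Jacobi}, $\eta$ is a JVF along $\gamma$ if and only if the $\dT E$-admissible curve $\del_\eta\gamma$ is a solution of the EL equation for $\dT L$ on the tangent lift algebroid $\dT E$. Since the tangent lift of an AL algebroid is AL, Remark \ref{rem:lagr_var_princ} applied to the lifted system reformulates this as
$$\<\dd S_{\dT L}(\del_\eta\gamma),\,\del_{\Delta\xi}(\del_\eta\gamma)>=0$$
for every $\dT E$-admissible generator $\Delta\xi\in\T E$ along $\rho(\eta)$ whose endpoints $\Delta\xi(t_0),\Delta\xi(t_1)$ are null vectors of the vector bundle $\T\tau:\T E\ra\T M$.

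Next I would match this test class with the one used in the definition of $\del^2S_L(\gamma)$. The nullity condition on $\Delta\xi(t_i)$ in $\T\tau:\T E\ra\T M$ forces the base projection $\xi(t_i):=\tau_E(\Delta\xi(t_i))$ to be the zero vector in $E_{\tau\circ\gamma(t_i)}$, so its base curve $\xi=\tau_E\circ\Delta\xi$ lies in $E^0_{\tau\circ\gamma}$; conversely, every such $\Delta\xi$ is an admissible lift of its base curve $\xi$ in the sense of Lemma \ref{lem:second_adm_var}. The proof of that lemma, which uses $\kappa_E$-invariance of $\dT^2 S_L$ together with the identity $\dT S_L=S_{\dT L}$, shows
$$\del^2 S_L(\gamma)(\eta,\xi)=\<\dd S_{\dT L}(\del_\eta\gamma),\,\del_{\Delta\xi}(\del_\eta\gamma)>,$$
with the right-hand side independent of the choice of the lift $\Delta\xi$ of $\xi$. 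Combining these two observations proves the first assertion: $\eta$ is a JVF along $\gamma$ if and only if $\del^2 S_L(\gamma)(\eta,\xi)=0$ for every $\xi\in E^0_{\tau\circ\gamma}$.

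For the Lie algebroid statement I would invoke Lemma \ref{lem:symmetry}, which guarantees that $\del^2 S_L(\gamma)$ restricted to $E^0_{\tau\circ\gamma}\times E^0_{\tau\circ\gamma}$ is a symmetric bilinear form. Its null space therefore equals the set of $\eta\in E^0_{\tau\circ\gamma}$ for which $\del^2 S_L(\gamma)(\eta,\xi)=0$ for all $\xi\in E^0_{\tau\circ\gamma}$, and by the first part this consists precisely of the JVFs along $\gamma$ that vanish at $t_0$ and $t_1$. Invoking Definition \ref{def:Jacobi} then identifies the dimension of this null space with the multiplicity of $\gamma(t_0)$ and $\gamma(t_1)$ as conjugate points along $\gamma$. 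The main obstacle in writing this out cleanly is the bookkeeping needed to check that the restricted test class coming from $\xi\in E^0_{\tau\circ\gamma}$ exactly coincides with the class of admissible variations with vanishing endpoints that appears when Remark \ref{rem:lagr_var_princ} is applied on $\dT E$; once that matching is verified, the theorem follows without any further computation.
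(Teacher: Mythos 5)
Your proposal is correct and follows essentially the same route as the paper: Theorem \ref{thm:Jacobi} to identify JVFs with EL solutions for $\dT L$ on $\dT E$, Remark \ref{rem:lagr_var_princ} applied to the lifted system, the identification $\dT S_{\dT L}(\del_{\Delta\xi}(\del_\eta\gamma))=\dT^2 S_L(\del_{(\Delta\xi,\eta)}\gamma)=\del^2 S_L(\gamma)(\eta,\xi)$ via $\kappa_E$-invariance and Lemma \ref{lem:second_adm_var}, and Lemma \ref{lem:symmetry} plus Definition \ref{def:Jacobi} for the Lie algebroid part. The endpoint bookkeeping you flag (null vectors of $\T\tau:\T E\ra\T M$ projecting to $\xi\in E^0_{\tau\circ\gamma}$) is exactly the matching already built into Lemma \ref{lem:second_adm_var}, so nothing further is needed.
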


\begin{proof}
The proof is straightforward. We know that $\eta$ is a JVF if and only if $\del_\eta\gamma$ satisfies the EL-equation for the Lagrangian $\dT L:\T E\ra \R$ on the lifted algebroid $\dT E$ (cf. Theorem \ref{thm:Jacobi}). From Remark \ref{rem:lagr_var_princ} the later is equivalent to the vanishing of 
$$\dT S_{\dT L}(\del_{\Delta\xi}(\del_\eta\gamma))=\dT\dT S_L(\del_{\Delta\xi}(\del_\eta\gamma))=\dT^2 S_L(\kappa_E\circ\del_{\Delta\xi}(\del_\eta\gamma))=\dT^2S_L((\del_{(\Delta\xi,\eta)}\gamma))$$
for every $\dT E$-admissible variation $\del_{\Delta\xi}(\del_\eta\gamma)$ with vanishing end-points. From \eqref{eqn:delta2_L} we know that this is equivalent to the vanishing of $\del^2 S_L(\gamma)(\eta,\xi)$ for every $\xi\in E^0_{\tau\circ\gamma}$. 

The second part of the assertion is trivial. From Lemma \ref{lem:symmetry} we know that $\del^2 S_L(\gamma)(\cdot,\cdot)$ is symmetric for Lie algebroids and that, by definition, existence and multiplicity of conjugated points along $\gamma$ is defined by means of JVFs along $\gamma$ vanishing at the end-points. 
\end{proof}

Observe that in general we cannot expect the dimension of the null space of $\del^2S_L(\gamma)$ to be finite (as is in the Riemannian case). Note namely, that Jacobi equation \eqref{eqn:Jacobi_2} is an implicit linear differential equation, so we cannot estimate the number of independent solutions. In particular if $L=0$, then ever curve $\xi\in E_{\tau\circ\gamma}$ is a JVF. However under extra assumptions for $L$ it is possible to control the number of solutions and consequently, the dimension of the null space. 

\section{Coordinate calculations}\label{sec:coor}

So far we worked in purely geometrical terms. In this section we give local description of the most important results and formulas. 

\paragraph{Manifolds and vector bundles}
We will be using the same notation conventions as before; i.e. $M$, $Q$ are manifolds, $\tau:E\ra M$ is a vector bundle $\pi:E^\ast\ra M$ its dual, etc. 

On $Q$ we introduce local coordinates $(q^\alpha)\ \alpha=1,\hdots,m$. We have induced coordinates $(q^\alpha,\dot q^\beta)$ on $\T Q$ and $(q^\alpha,p_\beta)$ on $\T^\ast Q$. On the iterated tangent bundle  $\T\T Q$ we have induced coordinates $(q^\alpha,\dot q^\beta,\del q^\gamma, \del \dot q^\delta)$;
$(q^\alpha,p_\beta,\dot q^\gamma,  \dot p_\delta)$ on $\T\T^\ast Q$ and $(q^\alpha,\dot q^\beta,p_\gamma, \pi_\delta)$ on $\T^\ast\T Q$.

On $\tau: E \rightarrow M$ we introduce local coordinates $(x^a, y^i)$,\ $a=1,\hdots, n$; $i=1,\hdots, k$, where $(x^a)$ are local base coordinates and $(y^i)$ are linear coordinates on fibers. The choice of such coordinates is equivalent to the choice of a basis of local sections $(e_1,\hdots,e_k)$ of $\tau:
E\rightarrow M$. On the dual bundle $\pi:E^\ast\ra M$ we have dual coordinates 
$(x^a, z_i)$.

After choosing the basis  $(e_1,\hdots,e_k)$ we can introduce natural local coordinates on tangent and cotangent bundles of $E$ and $E^\ast$
\begin{align*}
    &(x^a, y^i,{\dot x}^b, {\dot y}^j )  \text{ in $\T E$},
  &&(x^a, z_i, {\dot x}^b, \dot{z}_j) \text{ in $\T E^\ast$},\\
  &(x^a, y^i, p_b, \pi_j) \text{ in $\T^\ast E$},
    &&(x^a, \xi_i, p_b, \varphi^j) \text{ in $\T^\ast E^\ast $} .
\end{align*}

\paragraph{Algebroid structure.}

The skew-algebroid structure on $E$ is defined by means of functions $\rho^a_i(x)$ and $c^i_{jk}(x)$ given by 
$$\rho(e_i)=\rho^a_i(x)\pa_{x^a}$$
and
$$[e_j,e_j]=e_i\cdot c^i_{jk}(x).$$
The skew-symmetry of $[\cdot,\cdot]$ reads as $c^i_{jk}(x)=-c^i_{kj}(x)$; condition \eqref{eqn:ala} for $E$ being AL takes the form
\begin{equation}\label{eqn:ala_coor}
\rho^a_k(x)c^k_{ij}(x)-\left(\frac{\pa \rho^a_j}{\pa x^b}(x)\rho^b_i(x)-\frac{\pa\rho^a_i}{\pa x^b}(x)\rho^b_j(x)\right)=0
\end{equation}
ant condition \eqref{eqn:lie} for $E$ being Lie is \eqref{eqn:ala_coor} together with
\begin{equation}\label{eqn:lie_coor}
c^s_{lk}(x)c^l_{ij}(x)+c^s_{li}(x)c^l_{jk}(x)+c^s_{lj}(x)c^l_{ki}(x)-\frac{\pa c^s_{ij}}{\pa x^a}(x)\rho^a_k(x)-\frac{\pa c^s_{jk}}{\pa x^a}(x)\rho^a_i(x)-\frac{\pa c^s_{ki}}{\pa x^a}(x)\rho^a_k(x)=0.
\end{equation}
The canonical bivector $\Pi\in\X^2 (E^\ast)$ associated with the algebroid structure on $\tau$ reads as 
\begin{equation}\label{eqn:Pi_coor}
\Pi(x,z)=\frac 12c^k_{ij}(x)\xi_k\pa_{z_i}\wedge\pa_{z_j}+\rho^b_i(x)\pa_{x_b}\wedge \pa_{z_i}
\end{equation}
and the induced DVB map $\eps:\T^\ast E\lra\T E^\ast$ as
\begin{equation}
\label{eqn:eps_coor}
\eps(x^a,y^i,p_b,z_j) = (x^a, z_i, \rho^b_k(x)y^k, c^k_{ij}(x) y^iz_k + \rho^a_j(x) p_a).
\end{equation}
Relation $\kappa$ dual to $\eps$ (see \eqref{eqn:kappa}) relates vectors 
\begin{equation}
\label{eqn:kappa_coor}\kappa:\left(x^a,\,{Y}^i,\,\rho^b_k(x)y^k,\,\dot{Y}^j\right)\rel\left(x^a,y^i,\,\sigma^b_k(x)Y^k,\,
\dot{Y}^j+c^j_{kl}(x)y^kY^l\right)\,.
\end{equation}
It follows that if $\gamma(t)\sim (x^a(t),y^i(t))$ is an admissible curve and $\xi(t)\sim(x^a(t),f^i(t))$ a generator of the infinitesimal variation $\del_\xi\gamma$ (see Definition \ref{def:variation}) then
$$\del_\zeta\gamma\sim\left(x^a(t),y^i(t),\rho^a_i(x(t))f^i(t), \dot f^j(t)+c^j_{kl}(x(t))y^k(t)f^l(t)\right),$$
where $\gamma(t)$ satisfies additionally the admissibility condition 
$$\dot x^a(t)=\rho^a_i(x(t))y^i(t).$$

\paragraph{Canonical DVB isomorphisms.} For a particular example of the tangent bundle Lie algebroid $\tau_Q:\T Q\ra Q$ the coefficients of the anchor and the bracket in canonical coordinates $(q^\alpha,\dot q^\beta)$ are simply $\rho^\alpha_\beta(q)=\del^\alpha_\beta$ and $c^\alpha_{\beta\gamma}(q)=0$. Hence we have the following coordinate formulas for canonical DVB morphism $\alpha_Q:\T\T^\ast Q\ra\T^\ast\T Q$ and $\kappa_Q:\T\T Q\ra\T\T Q$:
\begin{align*}
&\alpha_Q:\left(q^\alpha,p_\beta,\dot q^\gamma,\dot p_\delta\right)\longmapsto\left(q^\alpha,\dot q^\beta,\dot p_\gamma,p_\delta\right),\\
&\kappa_Q:\left(q^\alpha,\dot q^\beta,\del q^\gamma,\del\dot q^\delta\right)\longmapsto\left(q^\alpha,\del q^\beta,\dot q^\gamma,\del\dot q^\delta\right).
\end{align*}

\paragraph{Tangent lifts.}
For a function $F:Q\ra\R$ its tangent lift $\dT F:\T Q\ra \R$ reads locally as
$$\dT F(q,\dot q)=\frac{\pa F}{\pa q^\alpha}(q)\dot q^\alpha$$
and $\dT^2 F:\T\T Q\ra\R$ as
$$\dT^2 F(q,\dot q,\del q,\del\dot q)=\frac{\pa F}{\pa q^\alpha}\del\dot q^\alpha+\frac{\pa^2 F}{\pa q^\beta\pa q^\alpha}(q)\del q^\beta\dot q^\alpha.$$
The $\kappa_Q$-invariance of $\dT^2 F$ follows from the symmetry of the second derivative $\frac{\del^2 F}{\pa q^\alpha\pa q^\beta}(q)=\frac{\del^2 F}{\pa q^\beta\pa q^\alpha}(q)$. 

The tangent lift $\dT \Pi\in\X^2(\T E^\ast)$ (see \eqref{eqn:lift_bivector}) of the bivector $\Pi\in\X^2(E^\ast)$ given by \eqref{eqn:Pi_coor} reads as (see \eqref{eqn:lift_coor}) 
\begin{equation}\label{eqn:Lift_Pi_coor}
\begin{split}
\dT \Pi(x,z,\dot x,\dot z)=\frac 12\left( c^k_{ij}(x)\dot z_k+\frac{\pa c^k_{ij}}{\pa x^a}(x)z_k\dot x^a\right)\pa_{\dot z_i}\wedge\pa_{\dot z_j}+c^k_{ij}(x)z_k\pa_{z_i}\wedge\pa_{\dot z_j}+\\
\frac{\pa\rho^b_i}{\pa x^a}(x)\dot x^a\pa_{\dot x^b}\wedge\pa_{\dot z_i}+\rho^b_i(x)\pa_{x^b}\wedge\pa_{\dot z_i}+\rho^b_i(x)\pa_{\dot x^b}\wedge\pa_{z_i}.
\end{split}
\end{equation}

\paragraph{EL and Jacobi equations.}
For a Lagrangian function $L:E\ra\R$ the maps $\lambda_L$ and $\Lambda_L$ read as
\begin{align*}
&\lambda_L(x,y)\sim\left(x^a,\frac{\pa L}{\pa y^i}(x,y)\right),\\
&\Lambda_L(x,y)\sim\left(x^a,\frac{\pa L}{\pa y^i}(x,y),\rho^b_i(x)y^k,c^k_{ij}(x)y^i\frac{\pa L}{\pa y^k}(x,y)+\rho^a_j(x)\frac{\pa L}{\pa x^a}(x,y)\right).
\end{align*}
Now the EL equation \eqref{eqn:EL} for $\gamma(t)\sim(x^a(t),y^i(t))$ takes the following form
\begin{align}
\label{eqn:ELadm_coor} &\frac{\dd}{\dd t} x^a(t)=\rho^a_i(x(t))y^i(t)\\
&\frac{\dd}{\dd t}\left(\frac{\pa L}{\pa y^i}(x(t),y(t))\right)=\rho^a_i(x(t))\frac{\pa L}{\pa x^a}(x(t),y(t))+c^k_{ji}(x(t))y^j(t)\frac{\pa L}{\pa 
y^k}(x(t),y(t)).\label{eqn:EL_coor}\end{align} 
We see that \eqref{eqn:ELadm_coor} implies admissibility of $\gamma(t)$. 

The Jacobi equation \eqref{eqn:Jacobi_2} for a JVF $\eta(t)\sim (x^a(t),\xi^i(t))$ along $\gamma(t)\sim (x^a(t),y^i(t))$ takes the following form
\begin{equation}\label{eqn:Jacobi1_coor}
\rho^a_k(x)c^k_{ij}(x)y^j\xi^i+\frac{\pa\rho^a_i}{\pa x^b}(x)\rho^b_j(x)y^j\xi^i=\frac{\pa \rho^a_j}{\pa x^b}\rho^b_i(x)y^j\xi^i
\end{equation}
and
\begin{equation}\label{eqn:Jacobi2_coor}
\begin{split}
\frac{\dd }{\dd t}\left(\frac{\pa^2 L}{\pa x^a y^j}\rho^a_i(x)\xi^i+\frac{\pa^2L}{\pa y^s\pa y^j}\left(\dot\xi^s+c^s_{ik}(x)y^i\xi^k\right)\right)=\\
\rho^a_i(x)\xi^i\left(\frac{\pa c^k_{sj}}{\pa x^a}(x)y^s\frac{\pa L}{\pa y^k}+c^k_{sj}(x)y^s\frac{\pa^2 L}{\pa x^a\pa y^k}+\frac{\pa \rho^b_j}{\pa x^a}(x)\frac{\pa L}{\pa x^b}+\rho^b_j(x)\frac{\pa^2 L}{\pa x^a\pa x^b}\right)+\\
\left(\dot \xi^s+c^s_{ik}(x)y^i\xi^k\right)\left(c^k_{sj}(x)\frac{\pa L}{\pa y^k}+c^k_{ij}(x)y^i\frac{\pa^2 L}{\pa y^s\pa^k}+\rho^a_j(x)\frac{\pa^2 L}{\pa y^s\pa x^a}\right),
\end{split}
\end{equation}
where $\gamma(t)$ satisfies \eqref{eqn:ELadm_coor} and \eqref{eqn:EL_coor}. observe that \eqref{eqn:Jacobi1_coor} is just \eqref{eqn:ala_coor} multiplied by $y^j$ and $\xi^i$. This is in agreement with our considerations form Remark \ref{rem:Jacobi_adm}.  

\paragraph{The second variation.} At the end of our considerations we will give the coordinate description of vectors $\del_{(\Delta\xi,\eta)}\gamma$ which generate second variations. We will be using induced coordinates
$$(x^a,y^i,\dot x^b,\dot y^j
,\del x^c,\del y^k,\del\dot x^d,\del\dot y^l)$$ on $\T\T E$. Our ingredients are:  \begin{itemize}
\item an admissible curve $\gamma(t)\sim(x^a(t),y^i(t))\in E$ i.e.
$\dot x^a=\rho^a_i(x)y^i$,
\item curves $\xi(t)\sim(x^a(t),f^i(t))$ and $\eta(t)\sim(x^a(t),h^i(t))$ from $E_{\tau\circ\gamma}$ and
\item a curve $\Delta\xi(t)\sim(x^a(t),f^i(t),\Delta x^b(t),\Delta f^j(t))\in\T_{\xi}E$ such that $\T\tau\circ\Delta\xi=\rho(\eta)$ i.e.
$\Delta x^a=\rho^a_i(x)h^i$.
\end{itemize}
Now $\del_{(\Delta\xi,\eta)}\gamma$ equals
\begin{equation}\label{eqn:del2_coor}
\left(x^a,y^i,\rho^b_i(x)f^i,\dot f^j+c^j_{kl}(x)y^kj^l, \rho^c_i(x)h^i,\dot h^k+c^k_{ls}(x)y^lh^s,\frac{\pa \rho^d_i}{\pa x^a}\rho^a_j(x)h^jf^i+\rho^d_i(x)\Delta f^i,\Delta y^l \right),
\end{equation} 
where $$\Delta y^l:=\dot{\Delta f^l}+c^l_{ij}(x)y^i\Delta f^j+c^l_{ij}(x)(\dot h^i+c^i_{st}(x)y^sh^t)f^j+\frac{\pa c^l_{ij}}{\pa x^a}(x)\rho^a_k(x)h^ky^if^j.$$

\section{Examples}\label{sec:examples}

\subsection{Riemannian geometry}\label{ssec:riem_geom}

\paragraph{$E$-connections.} Consider an AL algebroid structure on $\tau:E\ra M$ with the anchor $\rho$ and the bracket $[\cdot,\cdot]$. An \emph{$E$-connection} is a bilinear map 
$$\nabla:\Sec(E)\times\Sec(E)\lra\Sec(E)$$
satisfying the following properties:
\begin{align*}
&\nabla_{fX}Y=f\nabla_XY \\
&\nabla_XfY=f\nabla_XY+\rho(x)(f)Y,
\end{align*}
for every $X,Y\in\Sec(E)$ and $f\in C^\infty(M)$. Observe that even though  
$\nabla$ is defined on sections of $E$, to compute $\nabla_XY$at $p\in M$ it is enough to know $X(p)$ and the derivative of $Y$ in the direction $\rho(X)(p)$. 

An $E$-connection is called \emph{torsion-free} (compatible with the algebroid structure) if
$$\nabla_XY-\nabla_YX=[X,Y].$$
If $E$ is equipped with a Riemannian metric, i.e. a fiber-wise linear symmetric and non-degenerate map $g:E\times_M E\ra \R$, we can speak about \emph{metric} $E$-connections which satisfy
$$\rho(X)g(Y,Z)=g(\nabla_XY,Z)+g(Y,\nabla_XZ).$$

The following well-known result generalizes the fundamental fact form Riemannian geometry.

\begin{lem}\label{lem:nabla_LC}
Let $(E,g)$ be as above.  Then the formula:
\begin{align*}
2g(\nabla_XY,Z):=&\rho(X)g(Y,Z)+\rho(Y)g(X,Z)-\rho(Z)g(X,Y)+\\
&g([X,Y],Z)-g([Y,Z],X)+g([Z,X],Y)
\end{align*}
defines a unique metric and torsion-free $E$-connection.
\end{lem}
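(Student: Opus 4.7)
The plan is to adapt the classical Koszul argument from Riemannian geometry, splitting the proof into a uniqueness part and an existence part. Non-degeneracy of $g$ makes the stated formula characterize $\nabla_X Y$ uniquely as a section of $E$, so the whole content of the lemma is an algebraic manipulation involving the Leibniz rule and the skew-symmetry of $[\cdot,\cdot]$, together with the metric compatibility and torsion-free conditions.

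For uniqueness, I would assume $\nabla$ is a torsion-free metric $E$-connection and write down the three cyclic permutations of the metric identity
\[
\rho(X)g(Y,Z)=g(\nabla_X Y,Z)+g(Y,\nabla_X Z),
\]
namely those obtained by cycling $(X,Y,Z)$. Adding the first two and subtracting the third makes the terms $g(\nabla_X Z-\nabla_Z X,Y)$ and $g(\nabla_Y Z-\nabla_Z Y,X)$ appear, which by the torsion-free condition equal $g([X,Z],Y)$ and $g([Y,Z],X)$, while the remaining sum $g(\nabla_X Y+\nabla_Y X,Z)$ can be rewritten as $g(2\nabla_X Y,Z)-g([X,Y],Z)$. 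Rearranging yields exactly the Koszul formula in the statement, so $\nabla_X Y$ is determined by non-degeneracy of $g$.

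For existence, I would take the Koszul formula as the definition of an operation $\nabla_X Y$ (well-posed because $g$ is non-degenerate) and verify the three defining properties. Tensoriality in $X$ and the Leibniz rule in $Y$ reduce to purely algebraic identities: when substituting $fX$ one gets extra terms $\rho(Y)(f)g(X,Z)$, $-\rho(Z)(f)g(X,Y)$ from the Lie-derivative terms, and matching terms $-\rho(Y)(f)g(X,Z)$ and $+\rho(Z)(f)g(X,Y)$ from $g([fX,Y],Z)$ and $g([Z,fX],Y)$ via the Leibniz rule \eqref{eqn:lieb_rule}, which cancel pairwise; the analogous substitution $Y\mapsto fY$ leaves an uncancelled $2\rho(X)(f)g(Y,Z)$, producing the required extra term $\rho(X)(f)Y$. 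The torsion-free and metric properties then follow by adding or subtracting the defining expression for $\nabla_X Y$ with a permuted one and checking that, after cancellations using skew-symmetry of $[\cdot,\cdot]$, only $g([X,Y],Z)$ (respectively $\rho(X)g(Y,Z)$) survives.

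The main obstacle is the bookkeeping in the tensoriality check: one must confirm that no term using the Jacobi identity or the morphism property \eqref{eqn:ala} is needed, so that the construction works already on a skew-symmetric algebroid and, a fortiori, on the AL algebroid of the statement. Everything cancels using only the Leibniz rule and skew-symmetry, which explains why the Levi-Civita construction survives in this generality.
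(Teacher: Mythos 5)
Your proposal is correct, and it is exactly the standard Koszul-type argument that the paper has in mind: the paper in fact states Lemma \ref{lem:nabla_LC} without any proof, merely calling it a well-known generalization of the Riemannian Levi-Civita construction, so your write-up supplies the argument the authors omitted. Both halves of your plan check out: the uniqueness computation recovers the displayed formula using only the metric identity, the torsion-free condition and skew-symmetry (via $-g([X,Z],Y)=g([Z,X],Y)$), and in the existence direction the cancellations for $\nabla_{fX}Y$ and the surviving $2\rho(X)(f)g(Y,Z)$ for $\nabla_X(fY)$ come out exactly as you describe, using only the Leibniz rule \eqref{eqn:lieb_rule} and skew-symmetry --- neither the Jacobi identity nor the almost-Lie condition \eqref{eqn:ala} is needed, confirming your closing observation. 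The one point you should make explicit is that ``well-posed because $g$ is non-degenerate'' is not quite the whole story: non-degeneracy lets you recover $\nabla_XY$ from a $C^\infty(M)$-linear functional in $Z$, so you must also verify that the right-hand side of the Koszul formula is tensorial in $Z$ (replacing $Z$ by $fZ$ produces the extra terms $\rho(X)(f)g(Y,Z)+\rho(Y)(f)g(X,Z)-\rho(Y)(f)g(Z,X)-\rho(X)(f)g(Z,Y)$, which cancel by the same mechanism). This is routine and succeeds, but it belongs in the proof.
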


Given an $E$-connection $\nabla$ we can define its \emph{curvature}: 
$$R_\nabla(X,Y)Z:=\nabla_X\nabla_YZ-\nabla_Y\nabla_XZ-\nabla_{[X,Y]}Z.$$
A straightforward computation shows that $R_\nabla(X,Y)Z$ is tensorial in the first two entries and tensorial w.r.t. $Z$ whenever $E$ is an AL algebroid. 

\paragraph{A geodesic problem.} Given $E$ and $g$ as above we can consider a natural Lagrangian system $L:E\ra\R$ on $E$ given by the energy function  $L(a):=\frac 12g(a,a)$. Our goal is to describe the EL and the Jacobi equation for this system. We will show that the EL equation for an admissible curve $\gamma\in E$ reads as
$$\nabla_\gamma\gamma=0$$
and the Jacobi equation for $\eta\in E_{\tau\circ\gamma}$ as
$$\nabla_\gamma\nabla_\gamma\eta+R(\eta,\gamma)\gamma=0.$$
Of course in the above equations $\nabla$ is the canonical connection described in Lemma \ref{lem:nabla_LC} and $R$ its curvature. obviously these equations are straightforward generalizations of classical formulas from Riemannian geometry.

\paragraph{Calculations.}
In our considerations we will be working with geometric objects defined along an admissible curve $\gamma(t)\in E$. However sometimes we will be using objects defined locally around $\gamma(t)$, such as the algebroid bracket $[\cdot,\cdot]$ and the covariant derivative $\nabla$. Therefore we will need the following construction: if $\xi(t)$ is a curve in $E$ over $x(t)\in M$ and $\Delta \xi\in\T_\xi E$ a vector field along $\xi$, then by a \emph{$\Delta\xi$-extension of $\xi$} we will understand any local section $\ol\xi:M\ra E$ around $\xi$ such that $\ol\xi(x(t))=\xi(t)$ and $\T\ol\xi(\T\tau(\Delta\xi(t)))=\Delta\xi(t)$. 

In particular  let $\gamma(t)\in E$ be an admissible curve over $x(t)\in M$ and $\eta\in E_{\tau\circ\gamma}$ a generator of an admissible variation $\del_\eta\gamma$. Then for $\ol\gamma$ -- a $\del_\eta\gamma$-extension of $\gamma$ and $\ol\eta$ -- any extension of $\eta$ we got
\begin{equation}\label{eqn:bracket_0}
[\ol\eta,\ol\gamma]=0\quad\text{along $x(t)$.}
\end{equation}
This fact is just a matter of a simple coordinate calculation.

Now we are ready to derive the EL equation. We claim that for an admissible curve $\gamma(t)\in E$ we have
\begin{align}
&\lambda_L(\gamma(t))=\wt g(\gamma(t))\label{eqn:lambda_geod}
\intertext{and}
&\Lambda_L(\gamma(t))=\T\wt g\left(\dot\gamma(t)-V(\gamma(t),\nabla_\gamma\gamma(t))\right),\label{eqn:Lambda_geod}
\end{align}
where $\wt g:E\ni a\mapsto g(a,\cdot)\in E^\ast$ is a natural isomorphism and $V(a,b)\in\V_a E\subset\T_a E$ is a vertical vector corresponding to $(a,b)$ in the canonical identification $\V E\approx E\times_ME$. Note that \eqref{eqn:Lambda_geod} is tensorial w.r.t. $\gamma$ although the RHS contains $\dot\gamma$.

Formula \eqref{eqn:lambda_geod} can be checked directly in coordinates. To prove \eqref{eqn:Lambda_geod} we will do the following reasoning. For any admissible $\gamma(t)\in E$ over $x(t)\in M$ and any generator $\xi(t)\in E_{\tau\circ\gamma(t)}$:
\begin{align*}
\<\dd S_L(\gamma),\del_\xi\gamma>=\int_{t_0}^{t_1}\<\dd L(\gamma(t)),\del_\xi\gamma(t)>\dd t=\int_{t_0}^{t_1}\frac 12\T g(\del_\xi\gamma(t),\del_\xi\gamma(t))\dd t=
\int_{t_0}^{t_1}\frac 12\rho(\xi)g(\ol\gamma,\ol\gamma)(x(t))\dd t,
\end{align*}
where $\ol\gamma$ is an $\del_\xi\gamma$-extension of $\gamma$. The later equals
\begin{align*}
\int_{t_0}^{t_1}g(\nabla_\xi\ol\gamma,\ol\gamma)(x(t))\dd t=\int_{t_0}^{t_1}g([\ol\xi,\ol\gamma]+\nabla_\gamma\ol\xi,\ol\gamma)(x(t))\dd t\overset{\eqref{eqn:bracket_0}}=\int_{t_0}^{t_1}g(\nabla_\gamma\ol\xi,\ol\gamma)(x(t))\dd t=\\
\int_{t_0}^{t_1}\rho(\gamma)g(\xi,\gamma)(x(t))\dd t-\int_{t_0}^{t_1}g(\xi,\nabla_\gamma\gamma)(x(t))\dd t.
\end{align*}
Since $\gamma$ is admissible $\rho(\gamma)g(\xi,\gamma)(x(t))=\frac{\dd}{\dd t}g(\xi(t),\gamma(t))$; and hence
\begin{align*}
\<\dd S_(\gamma),\del_\xi\gamma>=&g(\xi(t),\gamma(t))\bigg|_{t_0}^{t_1}-\int_{t_0}^{t_1}g(\xi,\nabla_\gamma\gamma)(x(t))\dd t=\\
&\<\xi(t),\lambda_L(\gamma(y))>\bigg|_{t_0}^{t_1}-\int_{t_0}^{t_1}g(\xi,\nabla_\gamma\gamma)(x(t))\dd t.
\end{align*}
Comparing the above with \eqref{eqn:EL_var} we get
$$\<\xi(t),\VV_{\Lambda_L(\gamma(t))-\frac{\dd}{\dd t}\lambda_L(\gamma(t))}>=-g(\xi(t),\nabla_\gamma\gamma(t)).$$
Obviously 
\begin{align*}
\<\xi(t),\VV_{\Lambda_L(\gamma(t))-\frac{\dd}{\dd t}\lambda_L(\gamma(T))}>=&-g(\xi(t),\nabla_\gamma\gamma(t))=\<\xi(t),\wt g(-\nabla_\gamma\gamma(t))>=\\ &\<\xi(t),\VV_{\T\wt  g\left(\dot\gamma(t)-V(\gamma(t),\nabla_\gamma\gamma(t))-\dot\gamma(t)\right)}>=
\<\xi(t),\VV_{\T\wt g\left(\dot\gamma(t)-V(\gamma(t),\nabla_\gamma\gamma(t))\right)-\frac{\dd}{\dd t}\lambda_L(\gamma(t))}>.
\end{align*}
It follows that \eqref{eqn:Lambda_geod} holds. In particular we know that the EL equation reads as
$$\T\wt g\left(V(\gamma(t),\nabla_\gamma\gamma(t))\right)=0,$$
which is equivalent to $\nabla_\gamma\gamma(t)=0$.

Now we are ready to derive the Jacobi equation. According to \eqref{eqn:Jacobi_2} we need only to compute the tangent maps of $\lambda_L$ and $\Lambda_L$ at $\del_\eta\gamma$. Obviously
$$\T\lambda_L(\Delta\gamma)=\T\wt g(\Delta\gamma).$$
Now
\begin{align*}
\kappa_{E^\ast}\circ\T\Lambda_L(\Delta\gamma)=&\kappa_{E^\ast}\circ\T\T\wt g\left(\Delta(\dot\gamma)-\Delta V(\gamma,\Delta_\gamma\gamma)\right)=\\
&\T\T\wt g\circ\kappa_E\left(\Delta(\dot\gamma)-\Delta V(\gamma,\Delta_\gamma\gamma)\right)=
\T\T\wt g\left((\Delta\gamma)^{\cdot}-V(\Delta\gamma,\Delta(\nabla_\gamma\gamma))\right).
\end{align*}
We deduce that the Jacobi equation for $\eta\in E_{\tau\gamma}$ along a geodesics $\gamma$  takes the form $\T\T\wt g\left(V(\Delta\gamma,\Delta(\nabla_\gamma\gamma))\right)=0$ at $\Delta\gamma=\del_\eta\gamma$, which is equivalent to
$$\Delta(\nabla_\gamma\gamma)=0,$$
where $\Delta\gamma=\del_\eta\gamma$ and $\nabla_\gamma\gamma=0$. 
Now if $\Delta\gamma=\del_\eta\gamma$ then
$$\<\Delta\xi,\VV_{\T\T\wt g(V(\Delta\gamma,\Delta(\nabla_\gamma\gamma)))}>=\T g(\Delta\xi,\Delta(\nabla_\gamma\gamma))=\rho(\eta)g(\ol\xi,\nabla_{\ol\gamma}\ol\gamma),$$
where $\ol\gamma$ is a $\del_\eta\gamma$-extension of $\gamma$ and $\ol\xi$ is a $\Delta\xi$-extension of $\xi$. The above equals
$$g(\nabla_\eta\ol\xi,\nabla_\gamma\gamma)+g(\eta,\nabla_\eta\nabla_{\ol\gamma}\ol\gamma)=g(\eta,\nabla_\eta\nabla_{\ol\gamma}\ol\gamma),$$
since $\nabla_\gamma\gamma=0$. Using the definition of the curvature we get
\begin{align*}
g\left(\xi,\nabla_\eta\nabla_{\ol\gamma}{\ol\gamma}\right)=g\left(\xi,R(\eta,\gamma)\gamma+\nabla_{\ol\gamma}\nabla_{\ol\eta}{\ol\gamma}+\nabla_{[\ol\eta,\ol\gamma]}\ol\gamma\right)\overset{\eqref{eqn:bracket_0}}=g\left(\xi,R(\eta,\gamma)\gamma+\nabla_{\ol\gamma}\nabla_{\ol\eta}{\ol\gamma}\right)=\\
g\left(\xi,R(\eta,\gamma)\gamma+\nabla_{\ol\gamma}\nabla_{\ol\gamma}{\ol\eta}+\nabla_{\ol\gamma}[\ol\eta,\ol\gamma]\right)\overset{\eqref{eqn:bracket_0}}=g\left(\xi,R(\eta,\gamma)\gamma+\nabla_{\gamma}\nabla_{\gamma}{\eta}\right)
\end{align*}
In this way we have proved that 
$$R(\nabla,\gamma)\gamma+\nabla_\gamma\nabla_\gamma\eta=0$$
is equivalent to the Jacobi equation and, simultaneously, we have proved the generalization of \eqref{eqn:C}.
 
\subsection{Euler-Poincare equations}\label{ssec:EP_eqn}
\paragraph{Tangent lift $\dT\g$.}
Let $(\g,[\cdot,\cdot])$ be a skew-symmetric algebra. The vector space $\g$ treated as a vector bundle over a point, together with the bracket $[\cdot,\cdot]$ and the trivial anchor is an AL algebroid (condition \eqref{eqn:ala} is trivially satisfied). It is a Lie algebroid if  $(\g,[\cdot,\cdot])$ is a Lie algebra. We shall now describe the lifted algebroid structure $\dT\g$. It is also given by a skew-symmetric algebra structure on $\T\g$, as the tangent lift of the vector bundle $\g\ra\{\ast\}$ is again a VB over a point. Using the canonical identification $\T\g\approx\g\times\g$ we obtain a skew-symmetric algebra structure  $(\g\times\g,[\cdot,\cdot]_\dT)$.

The canonical isomorphism $\eps:\T^\ast\g\approx\g\times\g^\ast\lra\g^\ast\times\g^\ast\approx\T\g^\ast$ for the algebroid  $(\g,[\cdot,\cdot])$ reads as
\begin{equation}\label{eqn:eps_g}
\eps:(a,z)\longmapsto(z,\<z,[a,\cdot]>);
\end{equation}
where $\<\cdot,\cdot>:\g^\ast\times\g\ra\R$ is the canonical parring. It follows that for an admissible curve $\gamma=a(t)\in\g$ (since the anchor is trivial every path is $\g$-admissible) and a generator $\xi=f(t)\in\g$ the associated admissible variation reads as
\begin{equation}\label{eqn:adm_var_g}
\del_\xi\gamma=(a(t),\dot f(t)+[a(t),f(t)])\in\g\times\g\approx\T\g.
\end{equation}
Proceeding analogously as in Section \ref{sec:variations} we can differentiate a 1-parameter family of such variations to get
$$\Delta(\del_\xi\gamma)=\left(a,\dot f+[a,f],\Delta a,\dot{\Delta f}+[\Delta a,f]+[a,\Delta f]\right)(t)\in\g\times\g\times\g\times\g\approx\T\T\g.$$
On the other hand we know form Section \ref{sec:variations} that $\Delta(\del_\xi\gamma)$ is just $\kappa_\g\circ\del_{\Delta\xi}\Delta\gamma$, where $\del_{\Delta\xi}\Delta\gamma$ is a $\dT\g$-admissible variation along $\Delta\gamma=(a(t),\Delta a(t))$ generated by $\Delta\xi=(f(t),\Delta f(t))$. It turns out that 
\begin{equation}\label{eqn:adm_var_dTg}\del_{\Delta \xi}\Delta\gamma=\left(a,\Delta a,\dot f+[a,f],\dot{\Delta f}+[\Delta a,f]+[a,\Delta f]\right)(t)\in\g\times\g\times\g\times\g\approx\T\T\g.
\end{equation}
Formula \eqref{eqn:adm_var_g} for $(\g\times\g,[\cdot,\cdot]_\dT)$ describes the relation between the skew-algebra bracket and admissible variations. 
Hence from \eqref{eqn:adm_var_dTg} we deduce that 
\begin{equation}\label{eqn:bracket_g}[(a,\Delta a),(f,\Delta f)]_\dT=\left([a,f],[\Delta a,f]+[a,\Delta f]\right).
\end{equation}

\paragraph{The EL and Jacobi equations.}
Let us now describe the EL and Jacobi equations for a Lagrangian system $L:\g\ra\R$ on $\g$. Observe that $\dd L:\g\ra\T^\ast\g\approx\g\times\g^\ast$ defines a map $\del L:\g\ra\g^\ast$ given by $\dd L(a)=(a,\del L(a))$ (in fact $\del L=\lambda_L$ in the terminology of Section \ref{sec:EL}). From \eqref{eqn:eps_g} we deduce that the EL equation reads as
\begin{equation}\label{eqn:EL_g}
\frac{\dd}{\dd t}\left(\del L(a(t))\right)=\<\del L(a(t)),[a(t),\cdot]>.
\end{equation}
After introducing linear coordinates $(y^i)\ i=1,\hdots,k$ on $\g$ (denote by $c^i_{jk}$ the coefficients of the bracket in these coordinates) this equation takes the form
$$\frac{\dd}{\dd t}\left(\frac{\pa L}{\pa y^j}(y(t))\right)=c^k_{ij}\frac{\pa L}{\pa y^k}(y(t))y^i(t).$$
We recognize the Euler-Poincare equation. 

Since $\dT L(a,b)=\<\del L(a),b>$ then 
$$\del\dT L(a,b)=\left(\del L(a),\del^2L(a,b)\right)\in\g^\ast\times\g^\ast\approx\T\g^\ast,$$
where $\del^2 L:\g\times\g\ra\g^\ast$ is a map linear in the second entry. 
Formula \eqref{eqn:EL_g} together with expression \eqref{eqn:bracket_g} will give us the EL equation for the lifted system $\dT L:\T\g\ra\R$ on $\dT\g$. Then form Theorem \ref{thm:Jacobi} we can deduce the Jacobi equation for a Jacobi vector field $(a(t),h(t))\in\g\times\g\approx\T\g$ along $a(t)\in\g$:
\begin{equation}\label{eqn:Jacobi_g}
\begin{split}
&\frac{\dd}{\dd t}\left(\del L(a)\right)=\<\del L(a),[a,\cdot]>\\
&\frac{\dd}{\dd t}\left(\del^2 L(a,\dot h+[a,h])\right)=\<\del L(a),[\dot h+[a,h],\cdot]>+\<\del^2 L(a,\dot h+[a,h]),[a,\cdot]>.
\end{split}
\end{equation}
The first equation is just the EL equation for the Lagrangian system $L:\g\ra\R$. The second equation in coordinates $a(t)\sim(y^i(t))$ and $h(t)\sim (h^i(t))$ reads as
$$\frac{\dd}{\dd t}\left(\frac{\pa^2 L}{\pa y^j\pa y^s}(y)J^s\right)=\left(c^k_{sj}\frac{\pa L}{\pa y^k}(y)+c^k_{ij}y^i\frac{\pa^2 L}{\pa y^s\pa y^k}(y)\right)J^s,$$
where $J^s=\dot h^s+c^s_{kl}a^kh^l$. In a particular case $L(a)=\frac 12 g(a,a)$, where $g:\g\times\g\ra\R$ is a symmetric 2-tensor, we get $\del L(a)=g(a,\cdot)$ and $\del^2L(a,b)=g(b,\cdot)$. In this situation equation \eqref{eqn:Jacobi_g} takes the form
\begin{align*}
&g(\dot a,\cdot)=g(a,[a,\cdot])\\
&g(\ddot h+[a,\dot h]+[\dot a,h],\cdot)=g(a,[\dot h+[a,h],\cdot])+g(\dot h+[a,h],[a,\cdot]).
\end{align*}

\paragraph{Symmetry of the second variation.} Finally let us study the symmetry of the second variation of the action 
$$\g\ni\gamma(t)\longmapsto S_L(\gamma)=\int_{t_0}^{t_1}L(\gamma(t))\dd t.$$
We know that for $\eta$ and $\xi$ vanishing at the end-points 
$$\del^2 L(\gamma)(\eta,\xi)=\int_{t_0}^{t_1}\dT^2 L(\del_{(\Delta\xi,\eta)})\dd t,$$
where $\Delta\xi$ vanishes at the end-points. Since $\del_{(\Delta\xi,\eta)}\gamma=\kappa_\g\circ\del_{\Delta\xi}(\del_\eta\gamma)$ from \eqref{eqn:adm_var_g} and \eqref{eqn:adm_var_dTg} we get
$$\del_{(\Delta\xi,\eta)}\gamma=\left(a,\dot f+[a,f],\dot h+[a,h],\dot{\Delta f}+[\dot h+[a,h],f]+[a,\Delta f]\right)(t),$$
where $\gamma=a(t)$, $\eta=h(t)$, $\xi=f(t)$ and $\Delta\xi=(f(t),\Delta f(t))$. The curve $\Delta\eta=(h(t),\Delta f(t)+[h(t),f(t)])$ is $\kappa$-related to $\Delta\xi$ and also vanishes at the end-points, since $\Delta\xi$, $\xi$ and $\eta$ do. We have
$$\del_{(\Delta\eta,\xi)}\gamma=\left(a,\dot h+[a,h],\dot f+[a,f],\dot{\Delta f}+[\dot h,f]+[h,\dot f]+[\dot f+[a,f],h]+[a,\Delta f+[h,f]]\right)(t).$$
A short calculation shows that 
$$\del_{(\Delta\xi,\eta)}\gamma-\kappa_\g\circ\del_{(\Delta\eta,\xi)}\gamma=\left(a,\dot f+[a,f],0,J(a,h,f)\right)(t),$$
where $J(a,h,f):=[[a,h],f]-[a,[h,f]]+[h,[a,f]]$ is the Jacobiator. Now
\begin{align*}
&\del^2 S_L(\gamma)(\eta,\xi)-\del^2 S_L(\gamma)(\xi,\eta)=\int_{t_0}^{t_1}\dT^2 L(\del_{(\Delta\xi,\eta)}\gamma)\dd t-\int_{t_0}^{t_1}\dT^2 L(\del_{(\Delta\eta,\xi)}\gamma)\dd t=\\
&\int_{t_0}^{t_1}\dT^2 L(\del_{(\Delta\xi,\eta)}\gamma)\dd t-\int_{t_0}^{t_1}\dT^2 L(\kappa_\g\circ\del_{(\Delta\eta,\xi)}\gamma)\dd t=\int_{t_0}^{t_1}\<\dd\dT L(\del_\xi\gamma), \del_{(\Delta\xi,\eta)}\gamma-\kappa_\g\circ\del_{(\Delta\eta,\xi)}\gamma>\dd t=\\
&\int_{t_0}^{t_1}\<d L(\gamma), J(\gamma,\eta,\xi)>\dd t.
\end{align*}
We see that if $(\g,[\cdot,\cdot])$ is not a Lie algebra (i.e. the bracket $[\cdot,\cdot]$ does not satisfy the Jacobi identity) then for a generic Lagrangian $\del^2 S_L(\gamma)$ will not be symmetric. 

\section*{Final remarks}
\paragraph{Further study, Hamiltonian dynamics.}
Let us note that the classical geodesic equation can be regarded not only as a EL equation on $\T M$, but also as a Hamilton equation on $\T^\ast M$. In fact most of the results of this paper  can be repeated for Hamiltonian systems on algebroids. The results of Theorems \ref{thm:main} and \ref{thm:main1} hold in the Hamiltonian case after substituting ''EL equation'' by ''Hamilton equation'' and changing the standard variational principle to a more suitable one. A detailed study of this case will be given in a forthcoming publication \cite{JA_Jacobi_ham}. 
  
\paragraph{Curvature?}
It would be interesting to extend other aspects of the classical theory of Jacobi fields and conjugate points to the geometric setting presented in this paper. In particular it is tempting to find an analog of curvature for arbitrary Lagrangian (or Hamiltonian systems) and use it to prove results about existence or non-existence of conjugate points (see e.g. \cite{HE} for such results with applications to general relativity). The results of this kind (especially in the Hamiltonian context) could be useful in the study of Hamilton-Jacobi-Bellman equation in optimal control theory (see e.g. \cite{Fr}). It seems to us that similar ideas could be used to study phase transitions in thermodynamics.

So far we were unable to find any satisfactory definition of curvature. It is highly probable that a general definition of this kind does not exists and some additional assumptions about the nature of the systems in consideration should be made. For example the yet unpublished paper \cite{JK} gives a definition of the curvature for Hamiltonian systems if the system satisfies certain geometric conditions. In the simplest case of a Hamiltonian system on a cotangent bundle $\T^\ast M$ this condition is the hyper-regularity of the Hamiltonian.



\end{document}